\newcommand{\rr}{\ensuremath{\mathbb{R}}}
\newcommand{\zz}{\ensuremath{\mathbb{Z}}}
\newcommand{\qq}{\ensuremath{\mathbb{Q}}}
\theoremstyle{plain}
\newtheorem{thm}{Theorem}[section]
\newtheorem{lem}[thm]{Lemma}
\newtheorem{prop}[thm]{Proposition}
\theoremstyle{definition}
\newtheorem{defn}[thm]{Definition}
\theoremstyle{remark}
\newtheorem{rem}[thm]{Remark}
\newtheorem{exam}[thm]{Example}
\numberwithin{equation}{section}
\theoremstyle{plain}
\newtheorem*{thm1}{Theorem~\ref{thm:label}}
\def\dfn#1{{\textbf {#1}}}
\title{Rational Seifert Surfaces in Seifert Fibered Spaces}
\author[J. Licata]{Joan E. Licata}  \address{Institute for Advanced Study,
  Princeton, NJ 08540 \& Australian National University} \email{jelicata@math.stanford.edu} 
\author[J. Sabloff]{Joshua M. Sabloff}\address{Haverford College,
Haverford, PA 19041} \email{jsabloff@haverford.edu} \thanks{JMS is
partially supported by NSF grant DMS-0909273.}
\date{\today}
\begin{document}

\begin{abstract}
Rationally null-homologous links in Seifert fibered spaces may be represented combinatorially via labeled diagrams.  We introduce an additional condition  on a labeled link diagram and prove that it is equivalent to the existence of a rational Seifert surface for the link.  In the case when this condition is satisfied, we generalize Seifert's algorithm to explicitly construct a rational Seifert surface for any rationally null-homologous knot.  As an application of the techniques developed in the paper, we derive closed formulae for the rational Thurston-Bennequin and rotation numbers of a Legendrian knot in a contact Seifert fibered space. 
\end{abstract}

\maketitle
\section{Introduction}
This paper studies rationally null-homologous links in Seifert fibered spaces, with the goal of extending techniques from classical knot theory to a more general setting.   Previous work in this vein includes Gilmer's signatures for rationally null-homologous links \cite{gilmer:ratl-link-cob} and Calegari and Gordon's classification of knots with small rational genus \cite{cg:ratl-genus}. More generally, recent work on the Berge Conjecture has shown that the study of rationally null-homologous links is important for understanding Dehn surgery questions; see, for example, \cite{ras:berge}.  Rationally null-homologous knots are also interesting in a contact geometric setting.  For example, Baker and Etnyre generalized the definition of classical invariants for Legendrian knots to the case of rational homology three-spheres and classified rational Legendrian unknots \cite{be:rational-tb}, and Cornwell has studied Bennequin-type inequalities in lens spaces \cite{cornwell:lens-invts}.  Our interest in this topic was also prompted by contact geometry \cite{joan-josh:lch4sfs}, but we hope the techniques developed in this paper will find applications within the wider context of the link theory in rational homology three-spheres. 

Just as a knot in $\mathbb{R}^3$ is often studied via the combinatorics of its planar projection, we consider the projection of a knot in a Seifert fibered space to its two-dimensional orbifold base. As we show in Section~\ref{sect:sfs}, labeling this projection with some ancillary data permits the  topological type of the knot to be recovered.  Turaev initiated this ``shadow'' approach in the case of knots in an $S^1$ bundle over a surface, and the extension to $S^1$ bundles over orbifolds answers a question he posed in \cite{turaev:shadow}.  

After discussing labeled knot diagrams, we will introduce two further combinatorial objects: a \dfn{formal rational Seifert surface} is an assignment of an integer to each complementary components of the labeled knot projection, while a compatible \dfn{fiber distribution} is an assignment of integers to each quadrant around each double point of a labeled diagram. The precise definitions are given in Section~\ref{sec:frss} and allow us to state the following theorems:

\begin{thm}\label{thm:label} If $K$ is rationally null-homologous in a Seifert fibered space, then any labeled diagram for $K$ admits a formal rational Seifert surface with a compatible fiber distribution.
\end{thm}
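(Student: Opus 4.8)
The plan is to extract the combinatorial data directly from a genuine rational Seifert surface, whose existence is guaranteed by the hypothesis. Since $K$ is rationally null-homologous, there is an integer $n > 0$ and an oriented surface $\Sigma \subset M$ whose boundary wraps $n$ times around $K$. Put $\Sigma$ in general position with respect to the Seifert fibration $\pi \colon M \to B$, transverse to every regular fiber and meeting each singular fiber in a standard local model. For a complementary region $R$ of the labeled diagram $D = \pi(K)$ that contains no cone point, choose $p \in R$ and assign to $R$ the algebraic intersection number $\Sigma \cdot F_p$, where $F_p = \pi^{-1}(p)$; over a region containing a cone point of order $\alpha$ I would instead count with the appropriate orbifold weight so that the assignment remains an integer. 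This is the combinatorial shadow of the fact that an honest Seifert surface meets each fiber in a locally constant number of points.

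First I would check that the assignment is well-defined on regions: as $p$ varies inside $R$ the fiber $F_p$ sweeps out an annulus disjoint from $K$ (because $R$ misses $D$), so $\Sigma \cdot F_p$ is constant. Next I would compute the jump across an edge $a$ of $D$. Moving $p$ from one side of $a$ to the other sweeps an annulus $A$ with $\partial A = F_{p_1} - F_{p_0}$ and $K \cdot A = \pm 1$, and the Leibniz-type identity $\Sigma \cdot (F_{p_1} - F_{p_0}) = \pm (\partial \Sigma) \cdot A = \pm n (K \cdot A)$ shows that the labels of the two adjacent regions differ by exactly the multiplicity recorded on $a$. This is precisely the defining relation for a formal rational Seifert surface.

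Then I would produce the compatible fiber distribution. Near a double point $x$ the two strands of $K$ lie over a common point at distinct heights in the fiber, and to each of the four quadrants I would assign the algebraic count of sheets of $\Sigma$ meeting the arc of $F_p$ running between the two strands over that quadrant. Because these counts are local restrictions of the single global chain $\Sigma$, the four quadrant integers automatically satisfy the compatibility relations with the neighbouring region labels and with the crossing sign: there is no monodromy obstruction around $x$. Assembling the quadrant data over all double points yields a fiber distribution compatible with the formal rational Seifert surface of the previous step.

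The main obstacle I expect lies in the bookkeeping at the singular fibers and the integrality of the labels, which is exactly the content beyond Turaev's honest $S^1$-bundle setting. Near a multiple fiber of order $\alpha$ a regular fiber is homologous to $\alpha$ times the singular one, so the naive intersection count is only defined modulo $\alpha$ and must be corrected using the Seifert invariants before it matches the combinatorial labeling. Verifying that these corrections are consistent with the jump and compatibility relations established above, and that the resulting values are genuine integers rather than mere rationals, is where the real work lies. I would handle this by reducing the integrality statement to the arithmetic of the Seifert data in a neighbourhood of each cone point, using the standard local model chosen for $\Sigma$ there.
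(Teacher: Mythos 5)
Your idea of reading the multiplicities off as intersection numbers $\Sigma \cdot F_p$ of an actual rational Seifert surface with regular fibers is sound as far as it goes, and it is close in spirit to what the paper does: the paper builds a cell decomposition of $M$ adapted to $K$ (with a ``regional'' $2$-cell over each region meeting a regular fiber $A_R$ times, and ``vertical'' cells over the edges of $K$), takes a cellular $2$-chain $S$ with $\partial S = rK$, and sets $m(R_j) = c_j A_{R_j}$, where $c_j$ is the coefficient of the regional cell --- which is exactly the intersection number of the chain with a regular fiber over $R_j$. Your verification of the jump condition across an edge is correct, and the local-model analysis you propose near an exceptional fiber does establish the divisibility condition: if $V$ is the fibered solid torus around a cone point of order $\alpha$ inside $R$, then $[\Sigma \cap V] \in H_2(V,\partial V)\cong \zz$ is a multiple of the meridian disc class and a regular fiber meets that disc $\alpha$ times, so $\alpha$ divides $\Sigma\cdot F_p$. (One correction here: you should not assume $\Sigma$ can be made transverse to \emph{every} regular fiber --- horizontal surfaces need not exist in a given homology class --- but generic transversality to the finitely many fibers and arcs you actually intersect with is all you use.)

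There are, however, two genuine gaps. First, you never address the third condition in the definition of a formal rational Seifert surface, $\sum_j m(R_j)\,n(R_j)=0$. This is a nontrivial global constraint: the first two conditions determine $\mathbf{m}$ only up to an overall additive constant, and since $\sum_j n(R_j)=e(M)$ the third condition fails for all but one of those translates whenever $e(M)\neq 0$, so it cannot be dismissed. Proving it from your setup requires comparing $\Sigma\cap M_{R_j}$ with the horizontal relative cycle over $R_j$ and tracking the fiber classes appearing in the boundaries; carried out, this essentially reconstructs the paper's cellular bookkeeping, in which the $-A_{R_j}n(R_j)$ copies of the fiber in the boundary of each regional cell must cancel globally because $\partial S = rK$ contains no fiber. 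Second, the fiber-distribution paragraph is an assertion, not a proof. The two conditions a compatible $\mathbf{f}$ must satisfy are not local at a crossing: one of them, $m(R_j)n(R_j)+\sum_i f(x^i_j)=0$, ties the corner labels of a region to its defect, a global winding invariant of the lifted boundary of $R_j$, so it cannot hold ``automatically'' for unspecified local sheet counts; and it is not even clear that your naive counts against the preferred chords satisfy the crossing condition $\sum_{k=1}^4 f(x^i_{j_k})=0$, since the four preferred chords at a crossing repeat each of the two chords of the central fiber. The paper's formula $f(x^i_j)=-c_jb_i+\sum_e \epsilon_e f_e$ carries correction terms coming from the vertical $2$-cells precisely so that both identities come out, and some analogue of that bookkeeping --- accounting for how the surface winds in the fiber direction along $K$, not just how many sheets pass a chord --- is unavoidable in your approach as well.
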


\begin{thm}\label{thm:bounds} If a labeled diagram for $K$ admits a formal rational Seifert surface with a compatible fiber distribution, then $K$ bounds a rational Seifert surface in $M$.
\end{thm}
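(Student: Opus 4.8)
The plan is to prove this constructively, by generalizing Seifert's algorithm and using the combinatorial data to assemble the surface fiber-by-fiber over the orbifold base. Write $\pi : M \to \Sigma$ for the Seifert fibration. The guiding principle is that the integer $n_R$ assigned by the formal rational Seifert surface to a complementary region $R$ of the diagram should record the algebraic intersection number of the surface-to-be-built with a generic fiber over $R$. Crossing an arc of the projection of $K$ sweeps the fiber past the boundary of the surface and so changes this count, and the compatible fiber distribution records how the intervening sheets are positioned in the fiber direction, so that they can be joined coherently near $K$ and near each double point.

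First I would construct, over each complementary region $R$, a local piece $S_R \subset \pi^{-1}(R)$. Away from orbifold points, $\pi^{-1}(R) \cong R \times S^1$, and $S_R$ is taken to be $|n_R|$ horizontal sheets (sections), oriented according to the sign of $n_R$. When $R$ contains a cone point, the local model is a fibered solid torus and $S_R$ must instead be a multisection that winds around the singular fiber in the manner dictated by the Seifert invariants; this is precisely where the rational, rather than integral, nature of the surface appears, since a single sheet may close up only after traversing the singular fiber several times.

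Next I would glue these pieces together. Along each arc of the projection of $K$, two regions meet, and I would attach bands running in the fiber direction that connect the sheets of the two adjacent pieces, with the free boundary of these bands left to lie on $K$; the number of sheets to be joined is governed by the difference of the region labels, and the fiber distribution specifies their heights. At each double point the four quadrants meet, and following the over/under data I would insert twisted bands pairing the corresponding sheets, exactly as the crossing bands in the classical Seifert algorithm, with the twisting and the pairing determined by the fiber-distribution integers on the four quadrants. The compatibility of the fiber distribution with the formal surface is exactly the condition ensuring that the sheet heights along each arc and around each double point match, so that these bands can be attached to produce an embedded, oriented surface rather than an immersed or non-orientable one.

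Finally I would verify that the assembled surface $S$ is a genuine rational Seifert surface: that its boundary lies entirely on $K$ and wraps around $K$ with the correct multiplicity, so that $\partial S$ consists of parallel copies of a rational longitude with no spurious closed curves, and that $S$ is oriented. The hard part will be the gluing at the crossings and over the singular fibers: establishing that the combinatorial compatibility conditions translate precisely into the geometric matching needed for the bands to close up into an embedded oriented surface, and that the winding of the sheets around each singular fiber is consistent with the region labels on all sides, so that the resulting boundary is exactly a rational multiple of $K$.
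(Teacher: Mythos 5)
Your overall strategy---a generalized Seifert algorithm that places horizontal multisections over the complementary regions with multiplicities prescribed by the formal rational Seifert surface and then joins them near $K$ and over the double points---is the same as the paper's. But there is a genuine gap at the first step. You describe the piece over a region $R$ as $|n_R|$ horizontal \emph{sections} of $\pi^{-1}(R)\cong R\times S^1$ (multisections near cone points). A horizontal surface over $R$ whose boundary projects to $\partial R$ exists only when the winding of that boundary in the fiber direction vanishes, and the defect $n(R)$ of the labeled diagram measures exactly the obstruction; when $n(R)\neq 0$, the naive lift of $\partial R$ bounds no section. The paper deals with this by first constructing explicit ``Seifert circles'' on the boundary tori over the crossings whose fiber-direction winding at each corner is prescribed by the fiber distribution $\mathbf f$; Condition~(1) of compatibility, $m(R)\,n(R)+\sum_i f(x^i_R)=0$, is precisely the statement that the corrected boundary multicurve is null-homologous in $M_R$ and hence bounds a horizontal surface. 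Your proposal assigns $\mathbf f$ only the role of matching ``sheet heights,'' which misses the point that without invoking this condition the horizontal pieces you posit simply do not exist.

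The second gap is at the crossings. The extension over the solid torus $U_i$ above a double point is not the insertion of a twisted band as in the classical algorithm; there is no over/under data here, since $K$ is transverse to the fibers and the crossing is encoded by a chord of the fiber, so the surface must genuinely fill a three-dimensional solid torus. The paper does this by a movie of curve patterns on concentric cylinders $C_t$, manipulated by finger moves, saddle moves, and capping of circles, and the step that allows the movie to terminate in a collection of capped discs is Lemma~\ref{lem:cancel}: Condition~(2) of the fiber distribution forces the total algebraic intersection of the pattern with the meridian of $C_1$ to vanish, so that saddle resolutions can reduce the geometric intersection number to zero. You correctly flag the crossing gluing as ``the hard part,'' but the proposal does not identify this mechanism, and it is exactly where the second compatibility condition is used; without it the surface cannot be closed up inside $U_i$.
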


It is clear that $K$ bounding a rational Seifert surface implies that $K$ is rationally null-homologous; thus, these two theorems also show that the existence of a formal rational Seifert surface with a compatible fiber distribution is equivalent to the geometric condition that $K$ is rationally null-homologous.

A key construction in this paper is a generalization of Seifert's algorithm for knots in $\rr^3$ to rationally null-homologous knots in Seifert fibered spaces.  This algorithm, which provides the proof of Theorem~\ref{thm:bounds}, explicitly constructs a rational Seifert surface in $M$ from the given combinatorial data. The algorithm is described in Section~\ref{sec:seifert-algorithm}.

In the final section, we turn our attention to the special case of a Legendrian knot in a Seifert fibered space equipped with a transverse, $S^1$-invariant contact structure.  (This setting was studied in more detail in \cite{joan-josh:lch4sfs}.)  As an application of the algorithm defined in Section~\ref{sec:seifert-algorithm}, we compute the rational classical invariants of a Legendrian knot from its labeled diagram; this result generalizes the familiar formulae for classical invariants in the standard contact $\mathbb{R}^3$.

\begin{prop} \label{prop:initial-leg} Let $K$ be a rationally null-homologous Legendrian knot in a contact Seifert fibered space.  The rational rotation number of $K$ may be computed directly from a formal rational Seifert surface, and the rational Thurston-Bennequin number of $K$ may be computed directly from a compatible fiber distribution.\end{prop}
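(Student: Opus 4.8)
The plan is to compute both invariants by evaluating their geometric definitions on the explicit rational Seifert surface $\Sigma$ produced by the generalized Seifert algorithm of Section~\ref{sec:seifert-algorithm}, and then to show that each evaluation localizes to exactly one of the two pieces of combinatorial data. Write $p$ for the order of $K$ in $H_1(M;\zz)$, so that $\partial\Sigma$ wraps $p$ times around $K$. Following Baker--Etnyre \cite{be:rational-tb}, the rational rotation number $rot_\qq(K)$ is $\frac{1}{p}$ times the winding number of the tangent vector $TK$, viewed as a nonvanishing section of $\xi$ along $\partial\Sigma$, measured against a trivialization of $\xi|_\Sigma$ (which exists since $\xi|_\Sigma$ is a rank-two bundle over a surface with boundary); the rational Thurston--Bennequin number $tb_\qq(K)$ is $\frac{1}{p}$ times the algebraic intersection of the contact pushoff $K'$ with $\Sigma$. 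Because $\Sigma$ is assembled from horizontal sheets lying over the complementary regions of the diagram together with vertical, fiber-parallel bands inserted near the crossings, I expect each of these two quantities to see only one type of piece.

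For the rotation number, the key point is that the contact structure is transverse to the $S^1$-fibers, so the projection $\pi\colon M \to B$ to the base orbifold restricts to a fiberwise isomorphism $\xi_x \xrightarrow{\sim} T_{\pi(x)}B$. Consequently a section of $\xi$ over $\Sigma$ may be compared with the pullback of a vector field on $B$, and the winding of $TK$ against the trivialization splits into the turning of the projected tangent $\pi_*TK$ along the boundary curve in $B$ minus the index sum of the chosen base vector field over the projected surface. By Poincar\'e--Hopf / orbifold Gauss--Bonnet this index sum reduces to a weighted count over the complementary regions of the diagram: each region contributes its Euler characteristic together with the defect from any cone points it contains, weighted by the integer that the formal rational Seifert surface assigns to it (this integer records the multiplicity with which $\Sigma$ covers that region). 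The boundary turning term is read off from the cusps of the diagram. Thus, after dividing by $p$, $rot_\qq(K)$ becomes a formula in the region labels alone, i.e.\ in the formal rational Seifert surface.

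For the Thurston--Bennequin number, the contact pushoff $K'$ differs from $K$ by a section of $\xi$; by the transversality of $\xi$ this is a curve agreeing with $K$ after projection to $B$ but displaced in the fiber direction. The horizontal sheets of $\Sigma$ are graphical over $B$ and so are missed by a small fiber-direction displacement; the only intersections of $K'$ with $\Sigma$ occur where $K'$ crosses the vertical bands inserted at the double points. Since the fiber distribution was defined precisely to record, quadrant by quadrant, the number of fibers' worth of vertical band sitting in each corner of a crossing, the signed intersection count $K' \cdot \Sigma$ is a signed sum of the fiber-distribution integers over the quadrants, and dividing by $p$ expresses $tb_\qq(K)$ in the fiber distribution alone. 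The step I expect to be the main obstacle is making the rotation-number computation rigorous at the singular fibers: near a cone point the $S^1$-action is not free and the identification $\xi \cong \pi^*TB$ must be interpreted in an orbifold chart, so the local contribution of a cone point to the Euler-number tally (and its interaction with the region multiplicity and the normalization by $p$) requires careful branched-cover bookkeeping rather than the naive smooth Poincar\'e--Hopf count.
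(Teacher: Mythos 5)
Your proposal follows essentially the same route as the paper: both compute $rot_\qq$ by using the transversality of $\xi$ to the fibers to identify $\xi$ with the tangent bundle along the horizontal pieces of the algorithmically constructed surface and then applying Poincar\'e--Hopf with Riemann--Hurwitz/branched-cover corrections region by region (weighted by the multiplicities $m(R)$, with corner-turning contributions), and both compute $tb_\qq$ by pushing off in the fiber/Reeb direction and localizing the intersections with the surface to the solid tori $U_i$ over the double points, where they are counted via the fiber distribution. The only detail your sketch of the $tb$ count glosses over is the additional $-r$ contribution per crossing arising when the two strands of $K$ actually cross inside $U_i$ (the analogue of the classical writhe term), which appears alongside the fiber-distribution terms in the paper's final formula $\frac{1}{r}\sum_i(-r - f^i_{II} + f^i_{IV})$.
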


See Proposition~\ref{prop:legendrian} for a more precise statement.

% **********
\section{Labeled diagrams}

\subsection{Background}\label{sect:sfs}

We view Seifert fibered spaces as $S^1$ bundles over two-dimensional orbifolds, following the notational conventions of \cite{lisca-matic:transverse, massot}.  
 
Let $\Sigma'$ be an oriented surface, possibly with boundary, with $r+1$ discs removed from its interior.  Orient the new components of $\partial \Sigma'$ as the boundary of the missing disc, and let $M' =\Sigma' \times S^1$.  The first homology groups of the boundary tori of $M'$ are generated by classes $\langle m_i, \ell_i \rangle$, with  $\cup_i m_i=[\partial \Sigma'\times \{\text{pt}\}]$ and $\ell_i = [\{\text{pt}\} \times S^1]$, oriented so that $m_i \cdot \ell_i = 1$. Note that this orients all the fibers in $M$.

For $1\leq i\leq r$, let  $\alpha_i$ and $\beta_i$ be relatively prime integers satisfying $0 < \beta_i <\alpha_i$. Glue a solid torus $W_i$ to the $i^{th}$ boundary component of $M'$ so that the image of a meridian represents the homology class $\alpha_i m_i+ \beta_i \ell_i$. To the remaining boundary component, glue a solid torus  so that the meridian is sent to a curve representing the class of  $m_0 + b \ell_0$.  The fiber structure on the boundary of $M'$ extends uniquely to a fiber structure on the interior of the surgery solid tori, and the resulting identification space $M$ is said to have  \dfn{Seifert invariants} $(g,b; (\alpha_1, \beta_1), \ldots, (\alpha_r, \beta_r))$. Note if $\Sigma'$ has boundary, then $M$ has an $S^1$-fibered boundary.

Every orientable Seifert fibered space with an orientable fiber space can be realized via this construction; given two Seifert invariants, it is straightforward to determine whether they correspond to the same Seifert fibered manifold \cite{orlik}. The \dfn{rational Euler number} of a Seifert fibered space with Seifert invariants $(g,b; (\alpha_1, \beta_1), \ldots, (\alpha_r, \beta_r))$ is the rational number
\[e(M)=-b-\sum_{i=1}^r \frac{\beta_i}{\alpha_i} .\]

% *****
\subsection{Labeled Diagrams}
\label{sec:labeled-diagrams}

Let $L$ be an oriented link in a Seifert fibered space $M$ with Seifert invariants $(g,b; (\alpha_1, \beta_1), \ldots, (\alpha_r, \beta_r))$.  We suppose throughout that $L$ is everywhere transverse to the fibers, and we let $(\Sigma, \Gamma_L)$ denote the image of $(M, L)$ under the quotient map $\pi$ which sends each fiber to a point.  In order to recover the isotopy class of $L$ from this projection, we will use a \dfn{labeled diagram}; this notion was introduced in \cite{s1bundles} and is similar to Turaev's notion of a shadow for a link in a circle bundle \cite{turaev:shadow}.  

The fiber over a double point of $\Gamma_L$ is separated by its intersections with $L$ into two oriented chords, and we systematically select a preferred chord at each crossing.     Near a crossing, there is a unique quadrant which is coherently and positively oriented by $L$.  Declare this quadrant and the opposite quadrant to be \dfn{positive}, and declare the adjacent quadrants to be \dfn{negative}.  When the oriented boundary of a positive (respectively, negative) quadrant is lifted to segments of $K$ connected by a chord, the preferred chord is the one traversed positively (negatively).

Given a region $R$ in $\Sigma\setminus \Gamma_L $, define $M_R$ to be the restriction of the orbibundle $M \to \Sigma$ to $R$.  Let $A_R$ be the least common multiple of the orders of the orbifold points in $R$; if $R$ contains no orbifold points, set $A_R = 1$.  The subcurves of $L$ which project to $\partial R$ may be concatenated with the preferred chords over the corners of $R$ to yield a closed curve $L_R$ in $\partial M_R$; orient $L_R$ so that the orientation induced by its projection to $\Sigma$ agrees with that of $\partial R$.  Let $\widetilde{R}$ be the the $A_R$-fold branched covering $\widetilde{R}$ of $R$. Use the covering map to pull back the bundle $M_R$ to $\widetilde{R}$.  This lifts $L_R$ to a closed $1$-manifold $\widetilde{L}_R$ in an honest $S^1$ bundle over $\widetilde{R}$.

Let 
\[\gamma_1 \times \cdots \times \gamma_{k_R}: S^1 \sqcup \cdots \sqcup S^1 \rightarrow S^1\times \partial \widetilde{R}\] denote the map whose image is $\widetilde{L}_R$.  Choose a trivialization of $S^1\times \widetilde{R}$ and let $\iota:S^1\times \partial \widetilde{R} \hookrightarrow S^1\times \widetilde{R}$ denote the inclusion.  Finally, let $p: S^1\times \widetilde{R}\rightarrow S^1$ be projection to the first factor.

\begin{defn} Given a region $R$, the \dfn{defect} $n(R)$ of the region is 
 \[n(R)=\frac{1}{A_R} \sum_{i=1}^{k_R} \operatorname{deg}(p \circ \iota \circ \gamma_i).\]
\end{defn}

It is immediate from the definition that $n(R)=0$ if and only if the (multi)curve $\widetilde{L}_R$ bounds a section of the $S^1$ bundle.  In fact, this implies that the defect is independent of the chosen trivialization.

% \jlc{To see this, first note that a trivialization determines a first homology class in each boundary component of the (possibly lifted) bundle.  There is representative of this class which intersects an arbitrary fiber in the boundary component once, transversely, and together these determine a basis for the first homology of each torus.  To compute the defect of a region with respect to the chosen trivialization, describe $[\widetilde{L}_R]$ in terms of this basis and sum the coefficients associated to the fiber classes.  A different trivialization will change these coefficients by contributions which sum to zero, showing that the sum is independent of the original choice.}

It follows from this definition that the defect is additive on regions.  When $R$ contains no orbifold points, then the defect $n(R)$ is an integer; in general, the defect contains information about the Euler number of $M_R$.

\begin{lem} \label{lem:orbi-defect} The difference between the defect $n(R)$ and the Euler number $e(M_R)$ is an integer, i.e.\ $n(R) - e(M_R) \in \zz$.
\end{lem}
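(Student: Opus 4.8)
The plan is to prove the sharper statement that $n(R)$ and $e(M_R)$ have the same image in $\rr/\zz$, which is exactly the assertion that their difference lies in $\zz$. Since we only need to match the two quantities modulo $\zz$, I would exploit the two structural features available here: the defect is additive over a subdivision of $R$ into subregions glued along arcs, and the rational Euler number of an orbibundle is likewise additive when the base is cut along arcs and the fiber framings are matched along the cuts. As both normalizations depend only on the orbifold locus, the strategy is to localize the computation to a single orbifold point.

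First I would cut $R$ along embedded arcs into subregions $R_1,\dots,R_N$, each a disk containing at most one orbifold point, arranging that the arcs avoid the orbifold points and meet $\Gamma_L$ transversally. By additivity, $n(R)=\sum_k n(R_k)$ and $e(M_R)=\sum_k e(M_{R_k})$, so it suffices to prove $n(R_k)-e(M_{R_k})\in\zz$ for each piece. If $R_k$ contains no orbifold point, then $A_{R_k}=1$ and $M_{R_k}\cong R_k\times S^1$ is trivial over the disk $R_k$; in this case $n(R_k)$ is the honest total fiber-winding number of $L_{R_k}$, hence an integer, while $e(M_{R_k})\in\zz$. These pieces therefore contribute integers and may be discarded.

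It remains to treat a disk $R_0$ containing a single orbifold point $p$ of order $\alpha$ and Seifert data $(\alpha,\beta)$. Here $A_{R_0}=\alpha$ and the branched cover $\widetilde{R_0}\to R_0$ is modeled on $w\mapsto w^\alpha$, so $\widetilde{R_0}$ is again a disk, the pulled-back bundle is trivial, and $\partial\widetilde{R_0}\to\partial R_0$ is the connected $\alpha$-fold cover. Trivialize $M_{R_0}$ over $R_0\setminus\{p\}$ by a fiber coordinate $\phi$; in this trivialization $L_{R_0}$ has class $[\partial R_0]+d[\text{fiber}]$ for some integer $d$. Comparing $\phi$ with the coordinate $\psi$ of the smooth pulled-back bundle over $\widetilde{R_0}$, the Seifert framing forces a relation of the form $\phi=\psi+\beta\cdot(\arg w)$ on the overlap, since going once around $\partial\widetilde{R_0}$ must unwind the total framing twist of $-\beta$ accumulated over the $\alpha$ base loops. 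Integrating $d\psi$ over $\widetilde{L}_{R_0}$, which covers $L_{R_0}$ with total degree $\alpha$, yields $\sum_i\deg(p\circ\iota\circ\gamma_i)=\alpha d-\beta$, so that $n(R_0)\equiv-\beta/\alpha\pmod\zz$. Since the orbifold point contributes exactly $-\beta/\alpha$ to the rational Euler number, $e(M_{R_0})\equiv-\beta/\alpha\pmod\zz$ as well, and hence $n(R_0)-e(M_{R_0})\in\zz$.

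The main obstacle is this local computation at $p$: everything hinges on pinning down the sign and normalization of the framing relation between $\phi$ and $\psi$, so that the fractional part of the fiber-degree comes out to $-\beta/\alpha$ rather than $+\beta/\alpha$. This requires fixing a precise local model for the Seifert fibering of type $(\alpha,\beta)$ and for the branched cover, and checking that the resulting convention is the one under which the formula $e(M)=-b-\sum_i\beta_i/\alpha_i$ holds; once the conventions are aligned, the integrality is automatic. A secondary point to verify is that the additivity of both $n$ and $e$ genuinely applies to the chosen subdivision, i.e.\ that the auxiliary curve segments introduced over the cutting arcs are sections and so contribute nothing to either quantity.
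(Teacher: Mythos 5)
Your proposal is correct and follows essentially the same route as the paper: both arguments use additivity of the defect (and of the Euler number) to localize the computation at the orbifold points, and both show that each exceptional fiber of type $(\alpha,\beta)$ contributes $-\beta/\alpha$ modulo $\zz$ to the defect, matching its contribution to $e(M_R)$. The only difference is cosmetic — the paper extracts the local fractional contribution directly from the Dehn surgery description (a meridional loop near the surgered fiber meets the new meridian $-\beta$ times), whereas you derive it by comparing trivializations on the $\alpha$-fold branched cover.
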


\begin{proof} Recall that each exceptional fiber $F'$ can be viewed as the core of a solid torus where Dehn surgery was performed on some regular fiber $F$.  Let $K$ be a loop bounding a meridional disc in a regular neighborhood of $F$.  After performing $(\alpha, \beta)$ surgery, $K$ intersects a meridian of the surgered torus $-\beta$ times, so the defect of the region bounded by $K$ is $\frac{-\beta}{\alpha}$.  

Now let $K_1, \ldots, K_l$ be small loops in $R$ around the $l$ exceptional fibers in $M_R$.  The multicurve $\partial R \bigcup (\cup K_i)$ bounds a region with no orbifold points, and hence has an integral defect $d$.  Since the defect is additive, we see that 
$$n(R) = d - \sum_{i=1}^l \frac{\beta_i}{\alpha_i} = d' + e(M_R)$$
for some $d' \in \zz$.
\end{proof}

We say that a diagram $(\Sigma, \Gamma_L)$ is \dfn{labeled} when it is decorated with a defect in each region and with the fiber invariants associated to each orbifold point.  Abusing notation, we will refer to both the projection and the labeled diagram by $\Gamma_L$. Isotopy of the link changes the labeled diagram in one of several ways.  Figure~\ref{fig:moves} shows labeled Reidemeister moves for links in a Seifert fibered space; these correspond to isotopies of $L$ in the complement of the exceptional fibers.  When a strand of $L$ passes through an exceptional fiber of type $(\alpha, \beta)$ the labeled diagram changes by a \dfn{teardrop} move which wraps $\Gamma$ around the orbifold point $\alpha$ times.  See Figure~\ref{fig:tearlabels}.

 \begin{figure}[h!]
\begin{center}
    \relabelbox \tiny{
      \centerline{\scalebox{.5}{\epsfbox{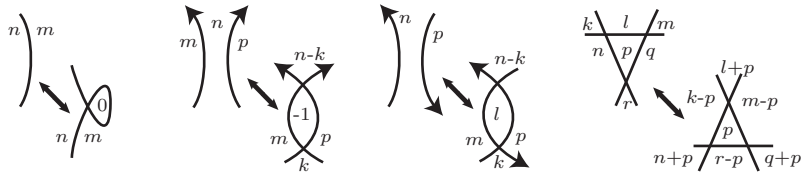}}}}
    \relabel{1}{$n$}
    \relabel{2}{$m$}
    \relabel{3}{$n$}
    \relabel{4}{$m$}
    \relabel{5}{$0$}
    \relabel{6}{$m$}
    \relabel{7}{$n$}
    \relabel{8}{$p$}
    \relabel{9}{$m$}
    \relabel{10}{-$1$}
    \relabel{11}{$p$}
    \relabel{12}{$k$}
    \relabel{13}{$n$-$k$}
    \relabel{14}{$n$}
    \relabel{15}{$p$}
    \relabel{16}{$m$}
    \relabel{17}{$l$}
    \relabel{18}{$p$}
    \relabel{19}{$k$}
    \relabel{20}{$n$-$k$}
    \relabel{21}{$k$}
    \relabel{22}{$l$}
    \relabel{23}{$m$}
    \relabel{24}{$n$}
    \relabel{25}{$p$}
    \relabel{26}{$q$}
    \relabel{27}{$r$}
    \relabel{28}{$l$+$p$}
    \relabel{29}{$k$-$p$}
    \relabel{30}{$p$}
    \relabel{31}{$m$-$p$}
    \relabel{32}{$n$+$p$}
    \relabel{33}{$r$-$p$}
    \relabel{34}{$q$+$p$}
   \endrelabelbox
   \caption{ Labeled Reidemeister moves.}\label{fig:moves}
   \end{center}
\end{figure}

In order to label the new regions created by a teardrop, we assume that the isotopy occurs in an arbitrarily small neighborhood of the exceptional fiber.  The defect is therefore completely determined by the preferred chords at the new crossings.  We may choose a local metric on the solid torus over the neighborhood of an orbifold point so that each regular fiber has length $1$ and the exceptional fiber has length $\frac{1}{\alpha}$.  With such a choice, the chords created by the teardrop have lengths in the set $\{ \frac{1}{\alpha}, \frac{2}{\alpha}, \hdots \frac{\alpha-1}{\alpha}\}$.

\begin{figure}[h!]
\begin{center}
    \relabelbox \footnotesize{
      \centerline{\scalebox{.65}{\epsfbox{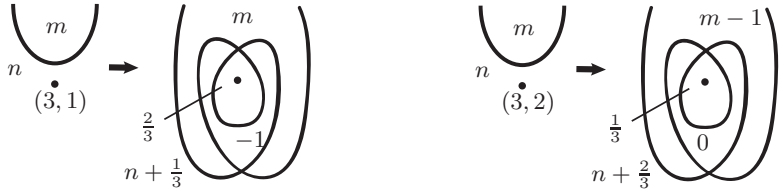}}}}
    \relabel{1}{$n$}
    \relabel{2}{$m$}
    \relabel{3}{$(3,1)$}
    \relabel{4}{$m$}
    \relabel{5}{$-1$}
    \relabel{6}{$n+\frac{1}{3}$}
    \relabel{7}{$\frac{2}{3}$}
    \relabel{8}{$n$}
     \relabel{9}{$m$}
    \relabel{10}{$(3,2)$}
    \relabel{11}{$\frac{1}{3}$}
    \relabel{12}{$n+\frac{2}{3}$}
    \relabel{13}{$0$}
     \relabel{14}{$m-1$}
     \endrelabelbox
  \caption{Labeled teardrop moves for $\alpha=3$. Left: Since the defect of the innermost region is $\frac{k}{3}$ with $k\equiv -1$ modulo $3$, the length of the innermost chord is $\frac{2}{3}$.  The defect of the next-innermost region is an integer, so the length of the other preferred chord satisfies $-2-2+j\equiv 0$ modulo $3$.  Right: The inner chord has length $\frac{1}{3}$ and the outer chord has length $\frac{2}{3}$. }\label{fig:tearlabels}
   \end{center}
\end{figure}

The defect of a region is the signed sum of the lengths of the chords assigned to its corners, where the sign is positive at coherent corners and negative otherwise.  Since the innermost region of the teardrop has a coherent corner, it follows from Lemma~\ref{lem:orbi-defect} that the defect of this region is $\frac{\alpha-\beta}{\alpha}$.  The defects of the other regions are determined by the signs of the corners and the requirement that the defect of any region not containing an orbifold point is integral.

 We say that two labeled diagrams are \dfn{equivalent} if they differ only by sequence of surface isotopies in $\Sigma$, labeled Reidemeister moves, or labeled teardrop moves.  The discussion above, together with the classical Reidemeister theorem, establishes the following lemma:

\begin{lem} \label{prop:diagram-isotopy}
  If two generic links in $M$ are isotopic, then their labeled diagrams are equivalent. 
 \end{lem}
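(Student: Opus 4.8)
The plan is to prove this exactly as one proves the classical Reidemeister theorem for links in $\rr^3$, with two enhancements dictated by the present setting: the labels must be tracked through each elementary move, and the exceptional fibers give rise to a genuinely new type of move. Throughout I treat both the embedded link isotopy in $M$ and its projected one-parameter family of diagrams in $\Sigma$.

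First I would put the given isotopy in general position. Let $L_t$, $t\in[0,1]$, be an isotopy with $L_0$ and $L_1$ transverse to the fibers and carrying generic projections. Using a multijet transversality argument adapted to the orbibundle $M\to\Sigma$, I would perturb $L_t$ rel endpoints so that for all but finitely many times the link $L_t$ is transverse to the fibers and projects to a generic immersed diagram in $\Sigma$ (only transverse double points, and disjoint from the orbifold points), and so that at each of the finitely many exceptional times exactly one codimension-one degeneration occurs. Away from the exceptional fibers these degenerations are the three classical catastrophes: a momentary tangency of $L_t$ with a fiber, producing a cusp in the projection; a self-tangency of the projection; or a triple point of the projection. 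The only new degeneration is a strand of $L_t$ crossing an exceptional fiber transversely.

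Next I would identify the change in the labeled diagram across each elementary interval. On any subinterval where $L_t$ stays transverse to the fibers with generic projection, the diagram varies by an ambient isotopy of $\Sigma$, so the labeled diagram changes only by a surface isotopy. At a classical catastrophe away from the orbifold points the combinatorial type of the projection changes by a Reidemeister I, II, or III move; since the move is supported in a small disc, the defects of all regions outside that disc are unchanged, and the additivity of the defect together with the fact that $n(R)$ is the signed sum of chord lengths over the corners of $R$ forces the defects of the affected regions to transform exactly as recorded in Figure~\ref{fig:moves}. Hence each such catastrophe realizes a labeled Reidemeister move. The one substantive new point is the passage through an exceptional fiber: when a strand crosses an exceptional fiber of type $(\alpha,\beta)$, I would confine the isotopy to an arbitrarily small neighborhood of that fiber, so that the change to the projection is precisely a teardrop wrapping $\Gamma_L$ around the orbifold point $\alpha$ times. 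The new labels are then forced, since the innermost region has a coherent corner and so, by Lemma~\ref{lem:orbi-defect}, defect $\frac{\alpha-\beta}{\alpha}$, while the remaining new defects are determined by the signs of their corners and the integrality of defects on orbifold-free regions, exactly as in Figure~\ref{fig:tearlabels}. Thus each such crossing realizes a labeled teardrop move.

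Concatenating these elementary changes over the finitely many intervals carries the labeled diagram of $L_0$ to that of $L_1$ by a finite sequence of surface isotopies, labeled Reidemeister moves, and labeled teardrop moves, so the two labeled diagrams are equivalent. I expect the main obstacle to be the genericity step: one must verify that the transversality reduction in the orbifold setting leaves crossing an exceptional fiber as the only new codimension-one phenomenon, and in particular that a generic strand never meets an exceptional fiber at more than one point at once, never crosses such a fiber simultaneously with a classical catastrophe, and that the teardrop is genuinely local so that no defect outside the neighborhood of the orbifold point is disturbed.
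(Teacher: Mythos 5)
Your proposal is correct and follows essentially the same route as the paper, which simply invokes the classical Reidemeister theorem together with the preceding discussion of how defects transform under the labeled Reidemeister and teardrop moves. Your write-up supplies the general-position and label-tracking details that the paper leaves implicit, but the underlying argument is the same.
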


\begin{figure}[h!]
\begin{center}
    \relabelbox \footnotesize{
      \centerline{\scalebox{.6}{\epsfbox{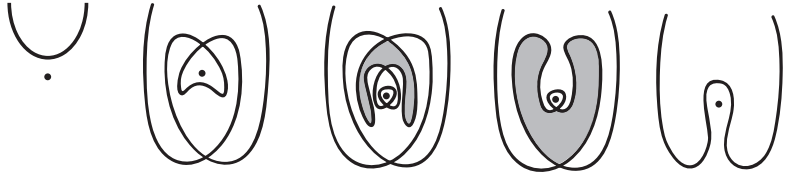}}}}
   % \relabel{1}{n}
    %\relabel{2}{m}
    %\relabel{3}{(3,1)}
    %\relabel{4}{m}
    %\relabel{5}{-1}
       % \relabel{6}{$n+\frac{1}{3}$}
    %\relabel{7}{$\frac{2}{3}$}
    %\relabel{8}{-1}
      %\relabel{9}{m}
    %\relabel{10}{1}
    %\relabel{11}{-1}
    %\relabel{12}{$\frac{2}{3}$}
    %\relabel{13}{$n+\frac{1}{3}$}
       % \relabel{14}{$n+\frac{1}{3}$}
    %\relabel{15}{$\frac{2}{3}$}
    %\relabel{16}{m-1}
    %\relabel{17}{1}
    %\relabel{18}{n}
    %\relabel{19}{m}
     \endrelabelbox
  \caption{Composing two teardrop moves with Reidemeister II moves (through the shaded regions) returns a diagram isotopic to the original.}\label{fig:octo}
   \end{center}
\end{figure}

\begin{rem} Although it is possible to define an inverse for the teardrop move, we present it as unidirectional; passing the innermost  strand of the teardrop back across the fiber introduces a second teardrop, and a sequence of Reidemister  II moves returns a projection isotopic to the original one.  See Figure~\ref{fig:octo} for an example. 
\end{rem}

Next, we define a diagram move that preserves $\Gamma$ but alters the defects of a pair of adjacent regions.  Following Turaev, we say \dfn{fiber fusion} is the operation that replaces an oriented segment of $L$ with a segment that has the same projection but travels once around the fiber. 

We define an action of $H_1(\Sigma)$ on the set of oriented links $\mathcal{L}(M)$ as follows:  let $\gamma$ be a generic simple closed curve on $\Sigma$ that represents a class $[\gamma] \in H_1(\Sigma)$; in particular, we assume that $\gamma$ intersects $\pi(L)$ transversely in finitely many points and misses the double points of $\pi(L)$ and the orbifold points of $\Sigma$.  Construct the link $\gamma \cdot L$ by performing fiber fusion on $L$ in a neighborhood of each point of $\gamma \cap \pi(L)$, where the sign of intersection dictates the sign of the fusion.

\begin{lem} \label{lem:h1-action}
  The isotopy type of the link $\gamma \cdot L$ depends only on the homology class $[\gamma]$. \end{lem}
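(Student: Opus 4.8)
The plan is to realize fiber fusion along a curve by an ambient isotopy of $M$ built from the global Seifert $S^1$-action, exploiting the fact that a full rotation of each fiber is isotopic to the identity. Write the circle action as $t\cdot x$ for $t\in\mathbb{R}$, periodic of period $1$, so that $t=1$ acts as the identity. Given $\gamma$ and $\gamma'$ with $[\gamma]=[\gamma']$, after a small perturbation I may assume both are generic with respect to $\pi(L)$ and miss the orbifold points. Since $\gamma-\gamma'$ is null-homologous it bounds a $2$-chain $C=\sum_j n_j R_j$, where the $R_j$ are the closures of the complementary regions of $\gamma\cup\gamma'$ in $\Sigma$ and $n_j\in\zz$.

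First I would build a vertical diffeomorphism from this chain. Let $\rho\colon\Sigma\to\mathbb{R}$ be locally constant equal to $n_j$ on the interior of $R_j$ and interpolated smoothly across collars of the walls $\gamma\cup\gamma'$, and set $\Psi_s(x)=\big(s\,\rho(\pi(x))\big)\cdot x$. Each $\Psi_s$ preserves every fiber and rotates it, so it is a diffeomorphism of $M$, and $\Psi_0=\mathrm{id}$, so $\{\Psi_s\}_{s\in[0,1]}$ is an ambient isotopy. The key point is that $\Psi_1$ rotates the fibers over the interior of $R_j$ by the integer $n_j$ and is therefore the identity there; hence $\Psi_1$ is supported in the collars of $\gamma\cup\gamma'$, where it performs a fiber Dehn twist whose strength across each wall is the jump of $\rho$. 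Because $\partial C=\gamma-\gamma'$, this jump is $+1$ full turn across each arc of $\gamma$ and $-1$ across each arc of $\gamma'$ (the sense of the action is chosen to match the sign convention in the definition of $\gamma\cdot L$).

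Next I would identify $\Psi_1(\gamma'\cdot L)$. Applying $\Psi_1$ to any link inserts, where a strand meets the collar of a wall, a winding around the fiber equal to the local jump of $\rho$; thus $\Psi_1$ adds one positive fiber fusion at each point of $\gamma\cap\pi(L)$ and one negative fiber fusion at each point of $\gamma'\cap\pi(L)$. Applying this to $L'=\gamma'\cdot L$: near each point of $\gamma\cap\pi(L)$ the link $L'$ agrees with $L$ (these points are disjoint from $\gamma'\cap\pi(L)$ after a generic perturbation), so $\Psi_1$ produces exactly the positive fusion defining $\gamma\cdot L$ there; near each point of $\gamma'\cap\pi(L)$ the preexisting positive fusion of $L'$ is cancelled by the negative one, undone by a small isotopy. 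Hence $\Psi_1(\gamma'\cdot L)$ is isotopic to $\gamma\cdot L$, and since $\Psi_1$ is ambient isotopic to the identity, $\gamma'\cdot L\simeq\Psi_1(\gamma'\cdot L)\simeq\gamma\cdot L$.

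The main obstacle is the tension in the role of $\Psi_1$: it must be isotopic to the identity yet realize nontrivial fusion. This is resolved precisely by letting $\rho$ take integer real values and rotating by full turns, so that $\Psi_1$ is trivial on each region interior while its variation across the walls produces the required twists. The points needing care are: checking that full turns of the $S^1$-action are the identity even along the exceptional fibers, so that the construction is valid over regions containing orbifold points (this is why I use the global Seifert action rather than a local trivialization); verifying that the jump data of $\partial C=\gamma-\gamma'$ matches the signed fusions in the definition; and confirming that $\Psi_1$-twisting over a collar agrees, up to a localizing isotopy, with fiber fusion concentrated at a point.
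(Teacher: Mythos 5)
Your argument is correct, and it is essentially the vertical-twist argument that the paper delegates to Turaev with the phrase ``the proof is the same as that in \cite{turaev:shadow}'': homologous curves cobound a $2$-chain $\sum_j n_j R_j$, and the integer-weighted fiber rotation over the regions is isotopic to the identity while its jumps across the walls realize the signed fiber fusions. Your use of the global Seifert $S^1$-action (rather than local trivializations) is exactly the right way to make the construction go through over regions containing orbifold points, which is the only modification the orbifold setting requires.
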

  
\begin{proof} The proof is the same as that in \cite{turaev:shadow}.
\end{proof}

We note that the labeled diagrams associated to $L$ and to $\gamma\cdot L$ have the same defects; this follows from the fact that for each region $R$, the closed loop $\gamma$ intersects  $\partial R$ zero times algebraically. Consequently,  a labeled diagram of genus greater than zero cannot determine an isotopy class of link.  We show next that each labeled diagram corresponds to  an equivalence class of links related by this $H_1(\Sigma)$ action.  

Let $\bar{\alpha}=(\alpha_1, \alpha_2, \hdots \alpha_k)$ be a list of the orders of the orbifold points on $\Sigma$.  Pick a list $\bar{\beta}=(\beta_1, \beta_2, \hdots \beta_k)$ such that $(\alpha_i, \beta_i)$ are relatively prime and $1\leq \beta_i<\alpha_i$.   Let $\mathcal{D}(\Sigma,q,\bar{\alpha}, \bar{\beta})$ denote the set of labeled diagrams  whose defects sum to $q$ and satisfy Lemma~\ref{lem:orbi-defect} in each region.

\begin{thm} \label{thm:realization} Let $M$ be a Seifert fibered space with exceptional fiber invariants $\{ (\alpha_i, \beta_i)\}$. There is a bijective correspondence between the set $\mathcal{D}(\Sigma, e(M), \mathbf{\alpha}, \mathbf{\beta})$, up to equivalence, and the set $\mathcal{L}(M)$, up to isotopy and the action of $H_1(\Sigma)$.  \end{thm}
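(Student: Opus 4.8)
The plan is to realize the correspondence as the bijection induced by the projection map $\pi$, and to prove separately that this map is well-defined, surjective, and injective; the genuinely new work, relative to Turaev's circle-bundle theorem, is concentrated at the orbifold points and will be handled in the branched covers $\widetilde{R}$ used to define the defect. I would first set up the forward map $\Phi\colon \mathcal{L}(M)\to \mathcal{D}$ sending a link $L$ to its labeled diagram $\Gamma_L$. That $\Phi$ descends to the quotients is already in hand: Lemma~\ref{prop:diagram-isotopy} shows isotopic links give equivalent diagrams, and the discussion preceding Lemma~\ref{lem:h1-action} shows $L$ and $\gamma\cdot L$ have identical labeled diagrams, so $\Phi$ factors through $\mathcal{L}(M)/(\text{isotopy},H_1(\Sigma))$. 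It then remains to check the image actually lands in $\mathcal{D}(\Sigma,e(M),\bar\alpha,\bar\beta)$. The per-region integrality condition is precisely Lemma~\ref{lem:orbi-defect}, so the only real point is the total-defect condition $\sum_R n(R)=e(M)$. I would prove this by reading each $n(R)$ as the obstruction to extending the lift of $L_R$ to a (rational) section of $M_R$ over $R$: along any edge of $\Gamma$ the shared preferred chord enters the two adjacent regions at corners of opposite type (coherent versus non-coherent), so its contributions cancel in the sum, and what survives is exactly the obstruction to a global section of $M\to\Sigma$, namely the rational Euler number $e(M)$.

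For surjectivity I would reconstruct a link from a diagram $D\in\mathcal{D}(\Sigma,e(M),\bar\alpha,\bar\beta)$ in two stages. First, lift the underlying projection $\Gamma$ to an arbitrary link $L_0$ transverse to the fibers, choosing relative fiber heights at each crossing so that the preferred chords of $L_0$ agree with those of $D$, and near each orbifold point wrapping the lift according to the local teardrop model so that $L_0$ already carries the correct fractional part of the defect in each region (this fractional part is forced by Lemma~\ref{lem:orbi-defect}). Then $k(R):=n(R)-n_{L_0}(R)$ is an \emph{integer} for every region, and since both $D$ and $\Phi(L_0)$ have total defect $e(M)$ we have $\sum_R k(R)=0$. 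A single fiber fusion along an edge of $\Gamma$ raises the defect of one adjacent region by $1$ and lowers the other by $1$ (the extra loop about a regular fiber lifts to $A_R$ loops in $\widetilde{R}$, so the normalizing factor $1/A_R$ yields an integral change); because the dual graph of the diagram is connected and $\sum_R k(R)=0$, a suitable finite sequence of fiber fusions carries $L_0$ to a link $L$ with $n_L\equiv n$, so $\Phi(L)=D$.

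For injectivity, suppose $\Phi(L)\sim\Phi(L')$. Realizing the chain of equivalences (surface isotopies, labeled Reidemeister moves, teardrop moves) as actual isotopies and passages through exceptional fibers via Lemma~\ref{prop:diagram-isotopy}, I may assume $L$ and $L'$ project to the same labeled diagram. Then $L$ and $L'$ are two fiber-transverse lifts of $\Gamma$ with the same preferred chord at every crossing and the same defect in every region. Over the $1$-skeleton of the diagram the two lifts differ by a fiberwise displacement, i.e. a map to the fiber $S^1$; the equality of all defects is exactly the statement that this displacement extends over each region with no obstruction, so it determines a well-defined class in $H^1(\Sigma;\zz)\cong H_1(\Sigma)$. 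Unwinding this identification shows $L'$ is isotopic to $\gamma\cdot L$ for the corresponding $[\gamma]\in H_1(\Sigma)$, which is precisely the action of Lemma~\ref{lem:h1-action}. Combined with surjectivity this makes $\Phi$ the desired bijection.

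The step I expect to be the main obstacle is the behavior at the exceptional fibers, which is where this argument goes beyond Turaev's smooth case. Both the realization in the second paragraph and the cohomological comparison in the third must be transported between the honest $S^1$ bundle over the branched cover $\widetilde{R}$ and the orbifold bundle $M_R$ downstairs. Concretely, I expect the hard part to be showing that the local teardrop model realizes every admissible fractional defect near an orbifold point of type $(\alpha,\beta)$ in a manner that is unique modulo fiber fusion and the teardrop move, so that the branched-cover identification of fiberwise displacements descends correctly to $\Sigma$. Once this local analysis is established, the global flow argument on the dual graph and the Poincar\'e-duality comparison assemble into the full bijection.
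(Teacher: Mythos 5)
Your proposal is correct and follows essentially the same route as the paper: surjectivity by lifting the projection arbitrarily and then correcting the integer defect discrepancies with fiber fusions (the paper organizes this step as an induction on the number of crossings rather than a direct flow on the dual graph, but the mechanism is identical), and injectivity by Turaev's fiberwise-displacement/$H_1(\Sigma)$ argument transported through the branched covers at the orbifold points, which is exactly what the paper cites. The one point you make explicit that the paper leaves implicit is the verification that the total defect of an actual link equals $e(M)$; your obstruction-theoretic cancellation along shared edges is sound and is the correct analogue of Turaev's statement that the gleams sum to the Euler number of the bundle.
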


In the absence of exceptional fibers, we note that this result follows from a theorem of Turaev which establishes a bijection between his ``shadow links'' and isotopy classes of links in $M$, up to the action of $H_1(\Sigma)$. To see the theorem in this special case, we describe a bijection between labeled diagrams and shadow links.  Let $R$ be a region of $\Sigma \setminus \Gamma$ with $p(R)$ positive corners and $q(R)$ negative corners with respect to the preferred chords.  In the notation of \cite{turaev:shadow},  $\alpha=2n(R)-p(R)+q(R)$ and $\beta=p(R)+q(R)$.  It follows that the ``gleam'' of $R$ is $p(R)-n(R)$.

\begin{proof}[Proof of Theorem~\ref{thm:realization}] As a first step, we show for a given labeled diagram in $\mathcal{D}(\Sigma, e(M), \mathbf{\alpha}, \mathbf{\beta})$, one may always find a link $L$ realizing this diagram.  

Fix a labeled diagram $(\Sigma, \Gamma) \in \mathcal{D}(\Sigma, e(M), \mathbf{\alpha}, \mathbf{\beta})$.  One may easily find a link $L$ in $M$ which projects to $\Gamma$, and by Lemma~\ref{lem:orbi-defect}, the defect of any region will differ from the Euler number of the bundle over that region by an integer.  We induct on the number of crossings to show that $L$ may be modified so that its defect in each region agrees with the given label.  For the base case, consider a diagram consisting of a collection  of disjoint embedded circles.  Selecting an arbitrary component of $\Sigma \setminus \Gamma$ to be ``outermost'' gives a partial order on the components of $\Gamma$.  Perform fiber fusions on the curves of $L$ which project to the boundary of any innermost region in order to adjust its defect to the given label.  Proceed outward, region by region.  Upon reaching the outermost region, there will be no free edges available for fiber fusion, but since each fusion operation preserves the sum of the labels, the defect of the outermost region will automatically agree with the given label. 

Now suppose that for any labeled diagram with fewer than $n$ crossings, we can find a knot $L\subset M$ whose defects agree with the labels.  Let   $(\Sigma, \Gamma)\in \mathcal{D}(\Sigma, e(M), \mathbf{\alpha}, \mathbf{\beta})$ have $n$ crossings.  Resolve one crossing so as to preserve the orientation of $\Gamma$ and apply the inductive hypothesis to construct a link $L'$ whose defects agree with the labels.  Replacing the crossing splits one region into two pieces, and Figure~\ref{fig:adjust} indicates how to perform fiber fusions to construct the desired $L$.

\begin{figure}[h!]
\begin{center}
    \relabelbox \footnotesize{
      \centerline{\scalebox{.55}{\epsfbox{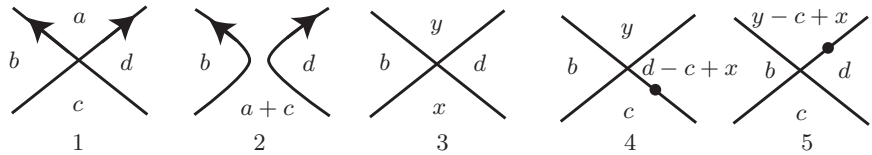}}}}
    \relabel{1}{$1$}
    \relabel{2}{$2$}
    \relabel{3}{$3$}
    \relabel{4}{$4$}
    \relabel{5}{$5$}
        \relabel{a}{$a$}
    \relabel{b}{$b$}
    \relabel{c}{$c$}
      \relabel{d}{$d$}
    \relabel{10}{$b$}
    \relabel{11}{$d$}
    \relabel{a+c}{$a+c$}
    \relabel{13}{$b$}
        \relabel{14}{$d$}
    \relabel{y}{$y$}
    \relabel{x}{$x$}
    \relabel{17}{$y$}
    \relabel{18}{$b$}
    \relabel{19}{$c$}
     \relabel{20}{$d-c+x$}
    \relabel{21}{$b$}
        \relabel{22}{$c$}
    \relabel{23}{$d$}
    \relabel{y-c+x}{$y-c+x$}
     \endrelabelbox
  \caption{ Given a labeled diagram (1), resolve a crossing of $(\Sigma, \Gamma)$ in order to apply the inductive hypothesis (2).  Replace the crossing (3), noting that $x+y=a+c$.  Finally, perform fiber fusions to $L'$ until its defects are as desired (4, 5).}\label{fig:adjust}   \end{center}
\end{figure}

As in \cite{turaev:shadow}, the remainder of the proof of Theorem~\ref{thm:realization} follows from two further steps.  The first step is showing that any two isotopy classes of links which correspond to the same labeled diagram are related by the action of $H_1(\Sigma)$.  The second step establishes that two generic links corresponding to equivalent labeled diagrams are related by a sequence of fiber fusions and isotopies.  Turaev's arguments apply with little modification to both cases; in the second case, we additionally note that any teardrop move on  labeled diagrams can be realized by a local isotopy of the link across an exceptional fiber. 
\end{proof}

% **********
\section{Combinatorics for Rational Seifert Surfaces}
\label{sec:frss}

In this section, we develop a combinatorial description of a rational Seifert surface for a rationally null-homologous knot $K$.  The description has the form of two decorations of the labeled diagram $\Gamma_K$ of $K$:  a ``formal rational Seifert surface'' and a compatible ``fiber distribution''.  The two decorations will be used in the next section to describe a generalization of the Seifert algorithm.

% *****
\subsection{Two Decorations of Labeled Diagrams}

A surface in a Seifert fibered space is said to be horizontal if it is everywhere transverse to the fibers; we relax this condition slightly and consider rational Seifert surfaces which are transverse except near fibers over double points of $\Gamma$.  The idea of the first decoration is that any such surface assigns a multiplicity to each region $R$. Conversely, we may characterize the sets of multiplicities on $\Sigma$ which are induced by such a surface using the following combinatorial object:

\begin{defn}\label{def:rfss} A \dfn{formal rational Seifert surface $\mathbf{m}$ of order $r$} is an assignment of an integral multiplicity $m(R_j)$ to each region  $R_j$ of $\Sigma\setminus \Gamma$ which satisfies the following conditions:
\begin{enumerate}
\item\label{except} The least common multiple $A_{R_j}$ of the orders of the orbifold points in $R_j$ divides $m(R_j)$;
\item\label{order} if $R_k$ and $R_l$ share an edge oriented as $\partial R_k$, then \[m(R_k)-m(R_l)=r;\]
\item \label{total}  summing over all regions, \[ \sum_j m(R_j) n(R_j)=0.\]
\end{enumerate}
\end{defn}

A formal rational Seifert surface may be viewed as a secondary labeling on a knot diagram, and we introduce a tertiary labeling as well. Let $x_j^i$ denote a corner of the region $R_j$ at the $i^{th}$ crossing.  (It is possible for a single region to fill more than one corner at a given crossing, but for notational convenience, we avoid introducing a third index to distinguish them.)

\begin{defn} \label{defn:fiber-dist-g} 
 Given a formal rational Seifert surface $\mathbf{m}$ for a labeled diagram $\Gamma$, a \dfn{fiber distribution} compatible with $\mathbf{m}$ is an assignment $\mathbf{f}$ of integers $f(x_j^i)$ to the corners of regions of $\Sigma \setminus \Gamma$ which satisfies the following properties:
  \begin{enumerate} 
  \item \label{flatdiscg} for each region $R_j$ with corners $x^i_j$ for $i\in \mathcal{C}_R = \{i_1, \ldots, i_{k_R}\}$, \[ m(R_j) n(R_j)+\sum_{i \in \mathcal{C}_R} f(x_j^i)=0;\]
  \item \label{welllabg} for each crossing labeled $i$ with incident regions $R_{j_1}, \ldots, R_{j_4}$, \[ \sum_{k=1}^4 f(x_{j_k}^i) =0.\]
 \end{enumerate} 
\end{defn}

Rational formal Seifert sufaces and their fiber distributions are best understood in terms of a special cell decomposition of $M$, which is constructed in Section~\ref{sec:cell}.  As motivation, however, one may view the rational formal Seifert surface as describing how a surface interacts with the base orbifold $\Sigma$, whereas a fiber distribution captures its interaction with the bundle structure of $M$.

\begin{exam}\label{ex:1}
The figure shows a labeled diagram for a knot in $L(5,2)$, together with a rational formal Seifert surface and fiber distribution.    

\begin{figure}[h!]
\begin{center}
    \relabelbox \footnotesize{
      \centerline{\scalebox{.8}{\epsfbox{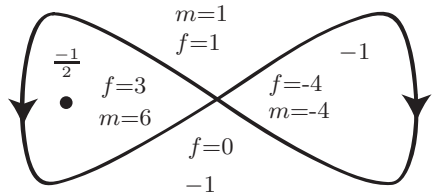}}}}
    \relabel{x}{$-1$}
    \relabel{ba}{$\frac{-1}{2}$}
    \relabel{-1}{$-1$}
      \relabel{m1}{$m$=$1$}
    \relabel{m2}{$m$=$6$}
    \relabel{m4}{$m$=-$4$}
    \relabel{f1}{$f$=$1$}
    \relabel{f2}{$f$=$3$}
    \relabel{f3}{$f$=$0$}
        \relabel{f4}{$f$=-$4$}
     \endrelabelbox
  \caption{ A labeled diagram for $K\subset L(5,2)$, together with a formal rational Seifert surface and a compatible fiber distribution.}\label{fig:lensex} \end{center}
\end{figure}

We will use this example to illustrate the generalized Seifert algorithm in Section~\ref{sec:seifert-algorithm}.

\end{exam}

\subsection{A cell decomposition for $M$}\label{sec:cell}

In this section, we construct a cell decomposition of $M$ using data from the knot $K$. We begin by enlarging the graph $\Gamma_K$ so that each complementary region is homeomorphic to a disc and contains at most one orbifold point.  If a region has nontrivial topology or contains more than one orbifold point, subdivide it using a collection of arcs $\Gamma_0 \subset \Sigma$ whose endpoints lie on $\Gamma_K$; let $\bar{\Gamma}$ denote the graph $\Gamma_K \cup \Gamma_0$.   Lift the arcs of $\Gamma_0$ to curves $K_0$ in $M$ whose endpoints lie on $K$. The knot $K$, the arcs $K_0$, and the fibers over each vertex of $\bar{\Gamma}$ form a $1$-complex in $M$.

The $2$-skeleton of $M$ consists of two types of cells.  First, for each edge $e$ of $\bar{\Gamma}$, let $D_e$ be the preimage of $e$ in $M$,  thought of as a disc whose boundary consists of the fibers over the ends of the edge, together with two oppositely-oriented copies of the corresponding segment of $K \cup K_0$. Refer to this type of cell as \dfn{vertical}.  Second, for each region $R$ of $\Sigma \setminus \bar{\Gamma} $, we construct the \dfn{regional} cell $D_R$ as follows.  Denote the fibers over double points in $\partial R$ by $\{F_i\}$.   The lifted curve $K_{R}$ satisfies $[A_R K_{R}-\sum b_i F_i]=0\in H_1(M)$ for any  $b_i$ such that $\sum b_i=A_R n(R)$. The $1$-chain $A_R K_{R}-\sum b_i F_i$ bounds a disc in $M_R$, and we include this as the $2$-cell $D_R$.

The remainder of $M$ consists of $3$-balls that come from removing a meridian disc from the solid tori over each region of $\bar{\Gamma}$; these balls make up the $3$-skeleton.

\subsection{Proof of Theorem~\ref{thm:label}}
Recall the statement of Theorem~\ref{thm:label} from the introduction:

\begin{thm1} If $K$ is rationally null-homologous in a Seifert fibered space, then any labeled diagram for $K$ admits a formal rational Seifert surface with a compatible fiber distribution.
\end{thm1}

\begin{proof}
Suppose that $K$ is rationally null-homologous with order $r$.  The knot $K$ has an obvious representative (which we shall also call $K$) as a $1$-chain in the cell decomposition described above.  Hence, there exists a $2$-chain $S$ such that $\partial S = rK$. For each region $R_j\in \Sigma\setminus (\bar{\Gamma} )$,   let $c_j$ denote the coefficient of $D_j$ in $S$.  Assign the multiplicity  $m(R_j)$ to be $c_j\alpha_{R_j}$. 

We begin by verifying Condition~\ref{except} of Definition~\ref{def:rfss}.  It is clearly satisfied on disc components of $\Sigma \setminus \bar{\Gamma}$.  Now suppose that $R_1$ and $R_2$ in $\Sigma\setminus \bar{\Gamma})$ are separated by the edge $e_0\in \Gamma_0$.  The assumption that $\partial S=rK$ implies that this edge has multiplicity $0$ in $\partial S$, so $m(R_1)=m(R_2)$.  This shows that the multiplicities are well-defined on components of $\Sigma \setminus \Gamma_K$. Since $\alpha_j \ | \ m(R_j)$ for $j=1,2$ and $m(R_1)=m(R_2)$, Condition~\ref{except} is satisfied on $R_1\cup R_2$, and an inductive argument shows that it holds for all components of $\Sigma \setminus \Gamma_K$.

Adding a vertical $2$-cell to a chain does not change the coefficient of any edge of $K$ in the boundary $1$-chain.  Each edge of $K$ appears $r$ times in $\partial S$, so the difference in multiplicities between the two adjoining regional cells is $r$, establishing Condition~\ref{order}.

Finally, we show that Condition~\ref{total} of Definition~\ref{def:rfss} holds.  By construction, the boundary of each regional cell consists of $A_{R_j}K_{R_j}$ and $-A_{R_j} n(R_j)$ copies of the fiber.  Thus the total number of copies of the fiber coming from regional $2$-cells is $\sum_j -m(R_j)n(R_j)$.  The addition of any vertical $2$-cell preserves this sum, and the assumption that $\partial S=rK$ implies that the copies of the fiber must cancel algebraically: $\sum_j m(R_j)n(R_j)=0$.

To construct a compatible fiber distribution $\mathbf{f}$, consider a quadrant $x^i_j$ of a crossing $i$ lying in the region $R_j$.  Suppose that this quadrant lies to the right of the oriented edges $\mathcal{E}(x^i_j)$ of $\bar{\Gamma}$; note that this set may be empty and has at most two elements.  We then define $f(x^i_j)$ to be
\[f(x^i_j) = -c_j b_i + \sum_{e \in \mathcal{E}(x^i_j)} \epsilon_e f_e,\]
where the integer $b_i$ comes from the construction of the regional cell $D_{R_j}$, $f_e$ is the coefficient of the vertical cell $D_e$ in $S$, and $\epsilon_e$ is positive if and only if the head of $e$ is incident to the double point $i$.

Condition \ref{flatdiscg} of Definition~\ref{defn:fiber-dist-g} now follows from two facts.  First, observe that $c_j \sum b_i = c_j A_{R_j} n(R_j) = m(R_j) n(R_j)$.  Second, note that each edge with $R_j$ on its right contributes $f_e$ to the sum associated to the quadrant at its head and and $-f_e$ to the sum associated to the quadrant at its foot; thus, the contributions coming from the vertical $2$-cells cancel around any given region.  Condition~\ref{welllabg} holds because $\sum_{k=1}^4 f(x_{j_k}^i)$ is the coefficient of the fiber over the double point $i$ in $\partial S$, but we know that $\partial S = rK$, and hence this coefficient must vanish.
\end{proof}

\begin{rem}
One may show that every formal rational Seifert surface admits a compatible fiber distribution, a fact which permits a stronger formulation of Theorem~\ref{thm:bounds}.  The proof is by induction on the number of double points of $\Gamma$, and we leave the details to the reader.   
\end{rem}

% **********
\section{Seifert algorithm for knots in $S^1$ orbifold bundles}
\label{sec:seifert-algorithm}

Given a formal rational Seifert surface $\mathbf{m}$ and a fiber distribution $\mathbf{f}$ for a rationally null-homologous knot of order $r$, we construct a rational Seifert surface of the same order.  The classical Seifert algorithm for knots in $\rr^3$ proceeds in three steps: first, one resolves the crossings in a projection of the knot to obtain a collection of Seifert circles in the plane.  Second, one views the Seifert circles as bounding disjoint embedded disks.  Finally, the Seifert disks are connected by twisted bands at the crossings.  The generalized algorithm for a knot in a Seifert fibered space parallels the classical algorithm. As a first step, we let $D_i$ denote a neighborhood of the $i^{th}$ double point of $\Gamma$ and let $U_i = \pi^{-1}(D_i)$.  We use \textbf{m} and \textbf{f} to resolve the knot into circles in $M \setminus \bigcup U_i$ (Section \ref{ssec:seifert-circle}).  Next, we view these resolved circles as bounding embedded surfaces in $M \setminus U_i$ (Section \ref{ssec:seifert-surfaces}). Finally, we extend these surfaces across the solid tori $U_i$ (Section \ref{ssec:crossing-construction}).  We begin by establishing notation which will be useful throughout the algorithm.

For each double point of $\Gamma$, parameterize the neighborhood $D_i$ as a unit disc  and let $C^i_t$ denote the $S^1$ bundle over the circle of radius $t$.  Dropping the superscript when the crossing is obvious, we split the torus $C_1$ into annuli denoted $A_{I}$, $A_{II}$, $A_{III}$, and $A_{IV}$ according to the corresponding quadrants of $\Sigma$; see Figure~\ref{fig:notation}.

 \begin{figure}[h!]
\begin{center}
    \relabelbox \footnotesize{
      \centerline{\scalebox{.7}{\epsfbox{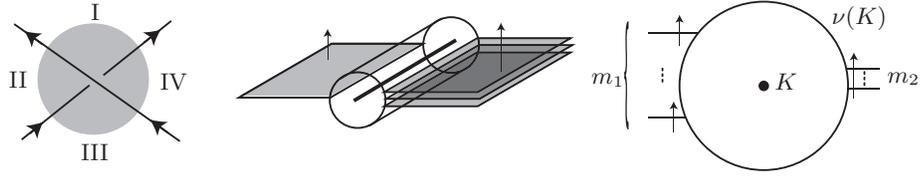}}}}
    \relabel{0}{$\nu(K)$}
    \relabel{1}{$m_1$}
    \relabel{2}{$K$}
    \relabel{3}{$m_2$}
    \relabel{I}{I}
        \relabel{II}{II}
    \relabel{III}{III}
    \relabel{IV}{IV}
   \endrelabelbox
   \caption{Left: The disc $D_i$ near a double point of $\Gamma$.  Center: $S$ in a neighborhood of $K$ away from a double point.  Right: A cross-section of $N$ between regions with multiplicities $m_1$ and $m_2$, where $m_1-m_2=r$.}\label{fig:notation}
\end{center}
\end{figure}

Let $K_0$ be the curves constructed in Section~\ref{sec:cell}.  Near $K \cup K_0$ but away from the double points of $\bar\Gamma$, the local behavior of any rational Seifert surface is dictated by the multiplicities of the adjacent regions; note that the multiplicities on regions of $\Gamma$ induce multiplicities on the regions of $\bar\Gamma$.  Let $N$ be a regular neighborhood of $K \cup K_0$, and suppose that the projection of a segment of $K \cup K_0$ separates regions with multiplicities $m_1$ and $m_2$.  In this case, the rational Seifert surface $S$ intersects $\partial N$ $m_i$ times on each side.  Correspondingly, to each side of a cross section of $N$ we draw $m_i$ parallel, transversely-oriented lines.  The endpoints of these lines trace out $m_i$ parallel curve segments on $\partial N$  as the cross-section varies; see Figure~\ref{fig:notation}.

\subsection{Resolution into Seifert Circles}
\label{ssec:seifert-circle}

The first step of the construction replaces $K$ with a collection of circles.
Remove the interior of $N$ and the fibered solid tori $U_i$ from $M$. As described above, the portions of $\partial N$ away from the $U_i$ and neighborhoods of the intersection points $K \cap K_0$ are decorated with collections of parallel curves.  Near the intersection points $K \cap K_0$, we simply join the endpoints of corresponding parallel curves.  Near the solid tori $U_i$, we will use \textbf{m} and \textbf{f} to construct a pattern of curves on $C^i_1$ which connect the endpoints of the parallel curves.

 \begin{figure}[h!]
\begin{center}
    \relabelbox \footnotesize{
      \centerline{\scalebox{.75}{\epsfbox{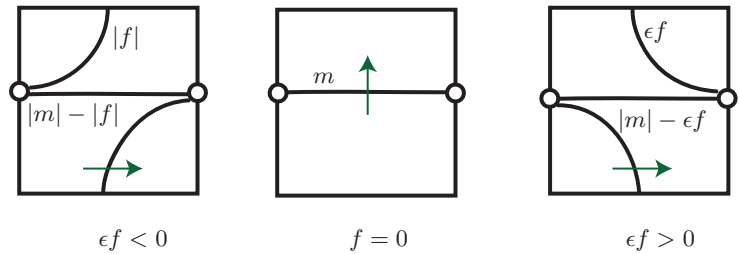}}}}
    \relabel{1}{$|f|$}
    \relabel{2}{$| m|-|f|$}
        \relabel{3}{$m$}
        \relabel{4}{$\epsilon f$}
        \relabel{5}{$|m|-\epsilon f$}
        \relabel{6}{$\epsilon f<0$}
        \relabel{7}{$f=0$}
        \relabel{8}{$\epsilon f>0$}
        \endrelabelbox
   \caption{Left: The figure above shows the local models for $f>0$.  The circles on the side edges are the intersections between $C_1$ and $N$. Changing the sign of $f$ reverses the arrows.}\label{fig:basicpattern}
\end{center}
\end{figure}

Fix a crossing, and for convenience, cut the corresponding solid torus along a meridional disc so that $C_1$ becomes a cylinder composed of four rectangles still labeled by I, II, III, and IV.  Orienting each rectangle as if viewed from $t>1$, decorate it with a pattern of multicurves as shown in Figure~\ref{fig:basicpattern}.  Each curve is decorated with an arrow indicating its transverse orientation and by an integer weight indicating its multiplicity.  Reversing the arrrow changes the sign of this weight. By construction, the endpoints of these curves can be glued to the endpoints of the curves on $\partial N$.  

The resulting pattern of curves on $\partial \big(N \cup (\cup_iU_i)\big)$  will serve as our Seifert circles. Before proceeding, we note the following:

\begin{lem}\label{lem:cancel} The sum of the algebraic intersection numbers of the pattern curves with the meridian of $C_1$ is zero around each double point.
\end{lem}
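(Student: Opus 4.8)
The plan is to reduce the statement to Condition~\ref{welllabg} of Definition~\ref{defn:fiber-dist-g}, namely that $\sum_{k=1}^4 f(x^i_{j_k})=0$, by identifying the algebraic intersection number of the pattern with the meridian, quadrant by quadrant, with the fiber distribution values $f(x^i_{j_k})$.

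First I would fix coordinates on the cut-open torus $C_1=C^i_1$. Writing $\theta$ for the base parameter along $\partial D_i$ and $\phi$ for the fiber parameter, the meridian $\mu$ of the solid torus $U_i$ is the base circle $\{\phi=0\}$ running in the $\theta$-direction, bounding the meridional disc along which we cut, while the four rectangles I--IV occupy the four base-arcs cut out by the quadrants. The side edges of each rectangle (constant $\theta$, varying $\phi$) are exactly where $C_1$ meets $\partial N$, so that the pattern curves glue to the parallel curves on $\partial N$ along these edges. The algebraic intersection of the pattern with $\mu$ then counts the signed crossings of the pattern curves through a level $\phi=\text{const}$.

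Next, reading off Figure~\ref{fig:basicpattern} rectangle by rectangle: the strands that run from one side edge to the other at essentially constant $\phi$ do not cross $\mu$, whereas the strands that wind once in the fiber direction each cross $\mu$ exactly once. The transverse orientations (arrows) are drawn so that reversing the sign of $f$ reverses them, and all four rectangles are oriented consistently as seen from $t>1$; consequently the net signed contribution of quadrant $k$ to the intersection with $\mu$ is exactly $f(x^i_{j_k})$, with a sign convention that is uniform over the four quadrants. Summing the four contributions yields $\sum_{k=1}^4 f(x^i_{j_k})$, which vanishes by Condition~\ref{welllabg}.

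The main obstacle is the sign bookkeeping in the previous step: one must check that the transverse orientations in Figure~\ref{fig:basicpattern}, together with the quadrant signs $\epsilon$, combine so that each quadrant contributes $+f(x^i_{j_k})$ rather than $\epsilon_k f(x^i_{j_k})$ with varying $\epsilon_k$, since Condition~\ref{welllabg} carries uniform signs. A conceptual cross-check that avoids the case analysis is to observe that $\mu$ bounds the meridian disc $\Delta=D_i\times\{\text{pt}\}$ in $U_i$, which is dual to the fiber $\lambda$ over the double point; hence $[\,\text{pattern}\,]\cdot\mu$ equals the fiber-winding coefficient $b$ in the decomposition $[\,\text{pattern}\,]=a\mu+b\lambda\in H_1(C_1)$. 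This $b$ is precisely the coefficient of the fiber over the double point in the boundary of the surface $S$ under construction, which, by the computation in the proof of Theorem~\ref{thm:label}, is $\sum_{k=1}^4 f(x^i_{j_k})=0$.
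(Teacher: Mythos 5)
Your proposal is correct and follows exactly the paper's route: the paper's entire proof is the single sentence that the lemma follows from Condition~\ref{welllabg} of Definition~\ref{defn:fiber-dist-g}, and your argument simply fills in the bookkeeping identifying each quadrant's signed intersection with the meridian as $f(x^i_{j_k})$ before summing. The closing cross-check via the coefficient of the fiber in $\partial S$ also mirrors the paper's own justification of Condition~\ref{welllabg} in the proof of Theorem~\ref{thm:label}.
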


\begin{proof} This follows from Condition~\ref{welllabg} of Definition~\ref{defn:fiber-dist-g}. 
\end{proof}

\begin{figure}[h]
\begin{center}
    \relabelbox \footnotesize{
      \centerline{\scalebox{.55}{\epsfbox{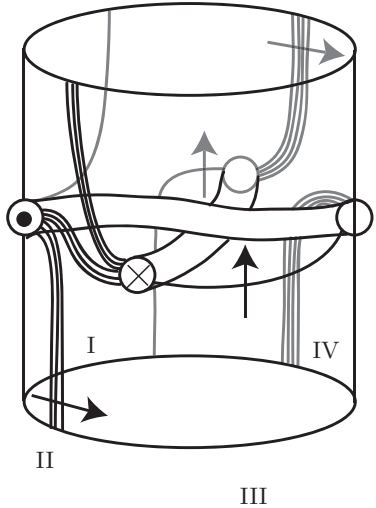}}}}
      \relabel{a}{I}
    \relabel{b}{II}
    \relabel{c}{III}
    \relabel{d}{IV}
     \endrelabelbox
   \caption{The intersection pattern on $C_1$ for the unique crossing in the knot from Example~\ref{ex:1}.}\label{fig:expat}
\end{center}
\end{figure}

\subsection{Surfaces Bounded by Seifert Circles}
\label{ssec:seifert-surfaces}

We begin the second step by constructing surfaces in $M\setminus \big( N \cup (\cup_iU_i)\big)$ bounded by the Seifert circles.

Condition~\ref{flatdiscg} of Definition~\ref{defn:fiber-dist-g} implies that all $\frac{m(R)}{A_R}$ Seifert circles over the boundary of a given region $R$ are null-homologous and hence bound horizontal embedded discs in $M_R$.  By construction, the signed intersection number of each curve pattern with $\partial N \cap C_1$ is $r$; see Figure~\ref{fig:notation}.

% \begin{figure}[h]
%\begin{center}
    %\relabelbox \footnotesize{
      %\centerline{\scalebox{.47}{\epsfbox{localmodelcirc.eps}}}}
    %\relabel{0}{f=1}
    %\relabel{1}{m=1}
    %\relabel{2}{f=1}
    %\relabel{3}{m=-1}
    %\relabel{4}{f=0}
    %\relabel{5}{f=0}
      %\relabel{6}{m=0}
        % \relabel{7}{-1}
         %\relabel{8}{-1}
         %\relabel{9}{-1}
 %\endrelabelbox
  % \caption{Constructing a boundary pattern near a crossing for a knot in $L(3,1)$.}\label{fig:scheme}\end{center}
%\end{figure}

% The proof of Theorem~\ref{thm:label} establishes that the multiplicities of regions adjacent to a curve in $\pi(K_0)$ are equal.  Thus, we may simply extend the horizontal discs constructed above straight across the part of $N$ that surrounds $K_0$.

To complete this step, we extend this surface over the cylinders $N \cap \big(M \setminus \cup  U_i\big)$.  There are two cases to consider for the extension over such a cylinder.  If the multiplicities of the adjoining regions have the same sign, then we extend the embeddings of the surfaces as in Figure~\ref{fig:nbhd-extend}(a) for an appropriate choice of $k,l \geq 0$. In particular, if the regions in question are separated by an edge of $\pi(K_0)$, then the multiplicities of the adjacent regions are the same and we use $k+l = m_-$ in the figure. If, on the other hand, the multiplicities of the adjoining regions have opposite signs, then the extension is as in Figure~\ref{fig:nbhd-extend}(b); in this case, there is no choice to make.

 \begin{figure}[h]
\begin{center}
    \relabelbox \footnotesize{
      \centerline{\epsfbox{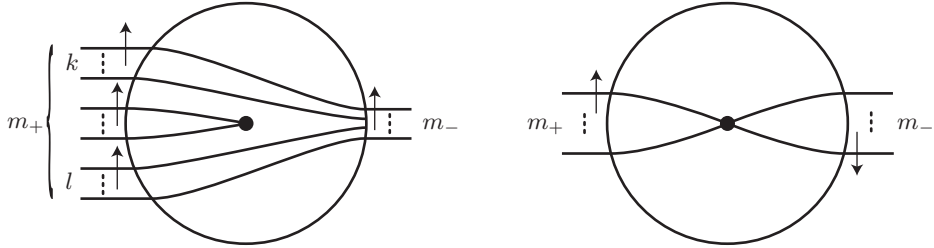}}}
    \relabel{0}{$k$}
    \relabel{1}{$m_+$}
    \relabel{2}{$l$}
    \relabel{3}{$m_-$}
    \relabel{4}{$m_+$}
    \relabel{5}{$m_-$}
   \endrelabelbox
   \caption{The extension of the rational Seifert surface across $N$ if (a) the multiplicities of the adjoining regions have the same sign and (b) if they have opposite signs. }\label{fig:nbhd-extend}
\end{center}
\end{figure}

\subsection{Extending across solid tori over crossings}
\label{ssec:crossing-construction}

We have now constructed a surface in the complement of the crossing tori $U_i$.  In this section, we extend the surface across each $U_i$ by describing how it intersects a collection of concentric cylinders $C_t$ of decreasing radius.  Modifications to the intersection pattern describe changes in the surface.
%Modifications of the intersection pattern that preserve the intersection with $\nu(K)$ correspond to changes in the surface as it extends across the solid torus.\footnote{\jsc{Not sure I understand this sentence.  Preserve the intersection pattern?  Or the signed intersection number? Or\dots?}\jlc{I want to say that we can describe a sequence of intersection patterns on $C_t$ for decreasing values of $t$, and that as long as each one has allowable intersections with the boundary of the neighborhood of $K$, then we're going to get a rational Seifert surface.}}  
In addition to surface isotopy of the curves, we allow the following three primitive moves:  

%Before beginning the extension proper, we lay out the possible modifications we may make to the transversally oriented curves $S \cap C_t$ as $t$ tends to $0$. In addition, we allow the following three primitive moves:
\begin{description}
\item[Finger Moves] We may replace a curve segment adjacent to $C_t\cap N$ with a pair of arcs ending on  $C_t\cap\partial N$; these intersections will have opposite signs.  This move preserves the topology of the surface, but pushes it locally into the neighborhood of $K$. See Figure~\ref{fig:extension-choice}.
\item[Capping a Circle] Any embedded circle may be removed from the intersection pattern.  This corresponds to capping off the corresponding component of $S\cap C_{t_0}$ with a disc embedded in the solid torus defined by $t<t_0$.
\item[Saddle Moves] We may perform a saddle resolution between two curves with opposite transverse oreintations. This corresponds to reducing the Euler characteristic of the surface by $1$.  

\begin{figure}[h]
\begin{center}
\scalebox{.8}{\includegraphics{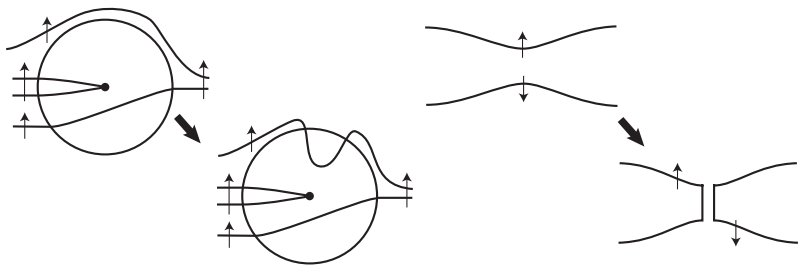}}
\caption{Left: A finger move creates a new pair of intersections between $S$ and $C_t \cap \partial N$.  Right: An oriented saddle resolution.}\label{fig:extension-choice}
\end{center}
\end{figure}

%\item[Finger Moves] We may push the surface $S$ across the top or bottom of\footnote{\jsc{Perhaps too specific\dots}\jlc{I agree. }} $N$, creating two opppositely-signed intersections between $S$ and $\partial N$ in each cross-sectional slice; see Figure~\ref{fig:extension-choice}.
%\item[Saddle Moves] We may perform saddle resolutions between two curves with oppositve transverse oreintations.
\end{description}

We will also make use of two consequences of these three moves.

\begin{description}
\item[Cancellation of Parallel Strands]  Two oppositely-oriented adjacent parallel strands between components of $N \cap C_t$ may be removed. See Figure~\ref{fig:derived}.
\item[Reconfiguration in $N$]  Any two configurations that appear in Figure~\ref{fig:nbhd-extend} are related by a sequence of saddle moves. See Figure~\ref{fig:derived}.
\end{description}

\begin{figure}[h]
\begin{center}
\scalebox{.8}{\includegraphics{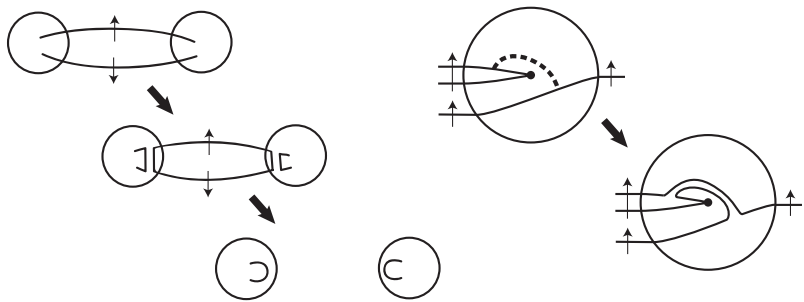}}
\caption{Left: Parallel strands with opposite orientations cancel.  Right:  An oriented saddle move modifies the configuration inside $N$.}\label{fig:derived}
\end{center}
\end{figure}

We now begin to extend the surface $S$ across the solid torus $U_i$.  Isotope all the intersections of the Seifert circles to the annulus $A_{II}$.  Fixing these intersections, standardize the pattern of curves on $C_t$ via isotopy, finger moves, and cancellations of oppositely-oriented parallel strands. Note that after cancellation, the configurations inside $N \cap C_t$ are again of the form in Figure~\ref{fig:nbhd-extend}.  Lemma~\ref{lem:cancel} states that the algebraic intersection number of these curves with the meridian is zero, and saddle resolutions between oppositely-oriented curves reduce the geometric intersection number to zero as well.

The resulting pattern may contain curves with both endpoints  on the same component of $N \cap C_t$; these may be again be removed using sequences of the moves above, especially capping circles.

As $t\rightarrow 0$, the strands of $K$ cross; this rotates a region containing two components of $N \cap C_t$ by $\pi$.   Further finger moves, cancellations, and isotopies yield a standard pattern consisting solely of horizontal curves.  It is clear that these bound a collection of discs, completing $S$.  Note that reconfigurations inside $N$ allow us to match those configurations coming from opposite sides of the intersection of one component of $N \cap U_i$.  See Figure~\ref{fig:torusexample2} for an example.  

\begin{figure}[h!]
\begin{center}
    \relabelbox \footnotesize{
      \centerline{\scalebox{.7}{\epsfbox{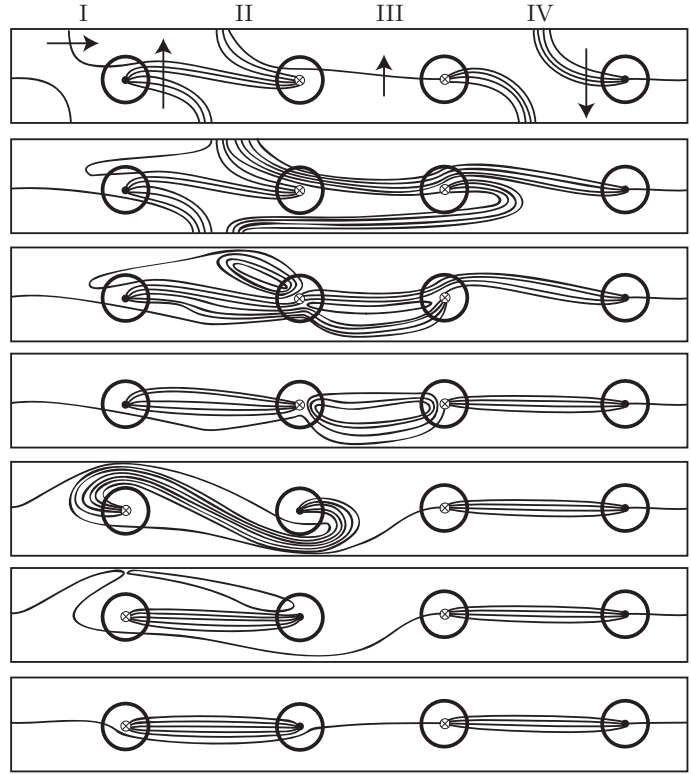}}}}
    \relabel{I}{I}
    \relabel{II}{II}
    \relabel{III}{III}
    \relabel{IV}{IV}
        \endrelabelbox
   \caption{Continuing Example~\ref{ex:1}, this shows a sequence of intersection patterns for decreasing values of $t$.}\label{fig:torusexample2}
\end{center}
\end{figure}

\section{Legendrian invariants}
\label{sec:legendrian}

In this section we use the use the generalized Seifert algorithm to compute the rational classical invariants for a Legendrian knot from a formal rational Seifert surface and fiber distribution.

\subsection{Contact Seifert fibered spaces}\label{sec:legendrian:intro}

We will use the phrase \dfn{contact Seifert fibered space} to denote an orientable Seifert fibered space over an orientable base, equipped with a contact structure $\xi$ transverse to the Seifert fibers.  Such a contact structure exists whenever the rational Euler number of a Seifert fibered space is negative \cite{kt:cr-seifert, lisca-matic:transverse}.  If we further specify a contact form $\alpha$ for $\xi$ with the property that its Reeb field points along the fibers (see \cite{joan-josh:lch4sfs}), then the defect defined in Section~\ref{sec:labeled-diagrams} can be interpreted as an integral of the curvature form associated to $\alpha$ on the Reeb orbit space.  We note that the Legendrian condition precludes the Reidemeister I move of Section~\ref{sec:labeled-diagrams}.

A formal rational Seifert surface $\mathbf{m}$ and a compatible fiber distribution $\mathbf{f}$ may be used to compute may be used to compute the rational classical invariants of a Legendrian knot in a contact Seifert fibered space.  We prove this using  the rational Seifert surfaces constructed in Section~\ref{sec:seifert-algorithm}.

For each region $R_j\in \Sigma\setminus \Gamma$, let $\chi_{orb}(R_j)$ denote the orbifold Euler characteristic of $R_j$ as a sub-orbifold of $\Sigma$; recall that this quantity is defined to be:
\begin{equation} \label{eq:orb-euler-char}
  \chi_{orb}(R) = \chi(R) + \sum_{j=1}^r \left( \frac{1}{\alpha_j} - 1 \right).
\end{equation}
Let $k_j$ and $l_j$ denote the number of double points of $\Gamma$ where $R_j$ fills one or three quadrants, respectively.  We restate Proposition~\ref{prop:initial-leg} as follows:

\begin{prop}\label{prop:legendrian} 
  The rational classical invariants of a null-homologous Legendrian knot $K$ maybe be computed from a formal rational Seifert surface $\mathbf{m}$ and a compatible fiber distribution $\mathbf{f}$ using the following formulae:
\begin{align} \label{eq:r} \text{rot}_{\mathbb{Q}}(K)&=\frac{1}{r}\sum_{\text{regions }R_j} m(R_j) \big[\chi_{orb}(R_j)+\frac{1}{4}(l_j-k_j)\big], \\
\label{eq:tb}
\text{tb}_{\mathbb{Q}}(K)&=\frac{1}{r}\sum_{\text{dble pts }i} (-r-f^i_{II}+f^i_{IV}).
\end{align}
\end{prop}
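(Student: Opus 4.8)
The plan is to use the explicit rational Seifert surface $S$ produced by the generalized Seifert algorithm of Section~\ref{sec:seifert-algorithm} as the geometric model in which both invariants become computable, region by region and crossing by crossing. Recall (following Baker--Etnyre \cite{be:rational-tb}) that for a rationally null-homologous Legendrian knot of order $r$ with rational Seifert surface $S$ satisfying $\partial S = rK$, the rational Thurston--Bennequin number is $\text{tb}_{\mathbb{Q}}(K)=\frac{1}{r}\,(S\cdot K^{+})$, where $K^{+}$ is the pushoff of $K$ along a nonvanishing section of $\xi$ transverse to $TK$ (the contact framing), while the rational rotation number is $\text{rot}_{\mathbb{Q}}(K)=\frac{1}{r}\,w(T(\partial S),\tau)$, where $\tau$ is a trivialization of $\xi$ over $S$ and $w$ counts the winding of the boundary tangent field relative to $\tau$. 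The two computations are essentially independent, so I would carry them out separately.

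For the rotation number I would first exploit that $\xi$ is transverse to the fibers: the projection $\pi$ identifies $\xi$ with the pullback $\pi^{*}T\Sigma$ of the (orbifold) tangent bundle of the base, as oriented plane bundles. A trivialization $\tau$ of $\xi$ over $S$ is then comparable to a section of $T\Sigma$, and the winding of $T(\partial S)$ relative to $\tau$ equals the turning of the projected tangent field $T\Gamma_K$ relative to that section. Regarding the projected surface as a $2$-chain on $\Sigma$ that covers each region $R_j$ with multiplicity $m(R_j)$, an orbifold Gauss--Bonnet (equivalently Poincar\'e--Hopf) count assembles the total turning of $\partial S$ as $\sum_j m(R_j)\,[\chi_{orb}(R_j)+\frac14(l_j-k_j)]$: the interior contribution over $R_j$ is its orbifold Euler characteristic, with the terms $\frac{1}{\alpha_j}-1$ in \eqref{eq:orb-euler-char} being precisely the cone-angle defects at the exceptional fibers, while the right-angled corners at the crossings contribute exterior-angle terms of $\pm\frac14$ of a turn, namely $+\frac14$ at a three-quadrant (reflex) corner counted by $l_j$ and $-\frac14$ at a one-quadrant (convex) corner counted by $k_j$. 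Since $\partial S = rK$, dividing by $r$ yields \eqref{eq:r}.

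For the Thurston--Bennequin number I would compute the intersection $S\cdot K^{+}$ by localizing it to the crossing tori $U_i$. Away from the crossings the surface produced by the algorithm is horizontal and the contact pushoff can be isotoped off it, so no interior intersections occur there; inside each $U_i$ the surface wraps around the fiber exactly as recorded by the fiber distribution $\mathbf{f}$, and the contact framing, whose pushoff direction is controlled relative to the fiber framing through the Reeb/fiber structure, meets $S$ in a number of points read off from the quadrant values. I would check against the local crossing model of Section~\ref{ssec:crossing-construction} that the contribution of the $i$th crossing is precisely $-r-f^i_{II}+f^i_{IV}$: the terms $\pm f^i_{II},\pm f^i_{IV}$ record the signed fiber-wrapping of $S$ in the two negative quadrants, and the constant $-r$ is the framing shift intrinsic to the local model, where $r$ strands of $S$ cross each fiber over a double point. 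Summing over the double points and dividing by $r$ gives \eqref{eq:tb}.

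I expect the main obstacle to lie in pinning down orientation and framing conventions consistently: first, in matching the contact framing $K^{+}$ to the fiber framing sharply enough to extract the local crossing contribution from $\mathbf{f}$ as exactly $-r-f^i_{II}+f^i_{IV}$, rather than some other signed combination; and second, in making the orbifold Gauss--Bonnet accounting rigorous at the exceptional fibers, so that the multiplicity condition (Definition~\ref{def:rfss}\eqref{except}) combines with the cone-angle defects to produce $\chi_{orb}(R_j)$ and not merely $\chi(R_j)$. Both issues are bookkeeping-heavy rather than conceptually deep, and each can be verified directly against Example~\ref{ex:1}.
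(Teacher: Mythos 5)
Your proposal follows essentially the same route as the paper: the rotation number is computed by trivializing $\xi$ via $T\Sigma$ and running a Poincar\'e--Hopf/Gauss--Bonnet count with corner terms $\pm\frac{1}{4}$ and cone-angle defects (the paper makes the orbifold part rigorous by passing to the $A_R$-fold branched cover and invoking Riemann--Hurwitz), and the Thurston--Bennequin number is computed by localizing the intersections of $S$ with the fiber-direction pushoff to the crossing tori and counting finger moves. The one caveat is that the local crossing count does not directly produce $-r-f^i_{II}+f^i_{IV}$ as a contribution ``from the two negative quadrants'': the paper's count yields $-r+f^i_{I}+f^i_{III}+2f^i_{IV}$, which agrees with the stated formula only after applying the relation $\sum_{k}f(x^i_{j_k})=0$ from Definition~\ref{defn:fiber-dist-g} --- exactly the sign-and-framing bookkeeping you flagged as the main obstacle.
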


The subsequent sections discuss these invariants and develop proofs of these propositions.

\begin{exam} The knot in Example~\ref{ex:1} can be realized as a Legendrian knot whose Lagrangian projection is shown in Figure~\ref{fig:lensex}. To see this, begin with the unknot with maximal Thurston-Bennequin number in the standard contact $S^3$.  Performing $1$ and $\frac{1}{2}$ surgery on a pair of regular fibers yields the labeled diagram of Figure~\ref{fig:lensex}, and the contact form may be extended across the surgery tori so that the induced Reeb orbits are the Seifert fibers.

The results above show that this knot has rational rotation number
\[ rot_\qq(K)= \frac{1}{5}\left[6(\frac{1}{4})+1(\frac{1}{2})-4(\frac{3}{4})\right] =-\frac{1}{5}\]
and rational Thurston-Bennequin number 
\[ tb_\qq (K)=\frac{1}{5}(-5-3-4)=\frac{-12}{5}.\]
\end{exam}

\subsection{The rational rotation number}

In \cite{be:rational-tb}, Baker and Etnyre define the rational rotation number of a rationally null-homologous knot by analogy with the classical rotation number for a null-homologous knot.  Let $j:S\hookrightarrow M$ be a rational Seifert surface for $K$.  Trivialize the pulled back contact bundle $j^*\xi$ over $S$ using a nonvanishing vector field $v$; since $K$ is Legendrian, $TK$ lies in the restriction of $\xi$ to $\partial S$.  One may therefore define the winding number of $j^*TK$:
\[ \text{rot}_{\mathbb{Q}}(K)=\frac{1}{r} \text{wind}_V(j^*TK).\]

To better understand a trivialization of $j^*\xi$, we will cut $S$ along its intersection with the vertical tori $\partial U_i$.  This creates a collection of disjoint surfaces with boundary, denoted  collectively by $\hat{S}$; we compute the rotation of each component individually and sum them to compute the rational rotation number of $K$.  Note that cutting introduces new segments to the boundary curves; although these could be isotoped to be Legendrian, their contributions to the rotation will cancel under gluing.  We may therefore ignore these segments and compute only the contributions to the rotation number of $T(\partial \hat{S})$ by $TK$.

We begin by showing that the contribution of a component $X$ of $\hat{S}$ lying in the solid torus $U_i$ to $\text{rot}_{\mathbb{Q}}(K)$ is zero. We may assume that the complex structure on $\Sigma$ is chosen so that the arcs of $\Gamma$ intersect the boundary of the neighborhood of the double point orthogonally.  Choosing the neighborhood of a fixed double point small enough, we may trivialize $T\Sigma$ over the disc $D_i$ with vector fields $\{v, iv\}$ so that $TK$ never coincides with the lines spanned by $v$ and $iv$. Pull back this trivialization to $\xi|_{U_i}$, and then again to $j^*\xi|_{X}$.  With respect to this trivialization, it is obvious that $K \cap U_i$ contributes zero to the rotation number.

We now turn to the portions of $S$ constructed from Seifert circles in Section~\ref{ssec:seifert-surfaces}, i.e., the components of $j(S) \cap (M_R \setminus \bigcup U_i)$.  Recall that these components of $\hat{S}$ are horizontal, and hence that we may identify $TS$ and $j^*\xi$ on these portions. The next lemma extends the existing trivialization of $j^*\xi$ from $j(S) \cap \partial U_i$ and describes the contribution to $\text{rot}_{\mathbb{Q}}(K)$ coming from a single region $R$.

\begin{lem} Suppose that the region $R$ has multiplicity $m(R)$ in a formal rational Seifert surface for $K$, and that $k$ and $l$ donote the number of double points in $\partial R$ where $R$ fills one and three quadrants, respectively.  The contribution of $j(S) \cap (M_R \setminus \bigcup U_i)$ to $\text{rot}_{\mathbb{Q}}(K)$ is  $\frac{1}{r}m(R)\big[\chi_{orb}(R)+\frac{1}{4}(l-k)\big]$.  
\end{lem}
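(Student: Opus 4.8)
The plan is to compute the winding of $j^*TK$ along the boundary arcs coming from $K$ over the region $R$, relative to the trivialization $v$, by a Poincaré--Hopf/Gauss--Bonnet turning-number argument. Since the relevant piece $X = j(S)\cap(M_R\setminus\bigcup U_i)$ is horizontal, I would identify $TX$ with $j^*\xi|_X$, as noted above, so that $v$ becomes a nonvanishing framing of $TX$ and the sought contribution is $\frac1r$ times the total turning of the oriented boundary tangent along the $K$-arcs of $\partial X$, measured against $v$. First I would recall from Section~\ref{ssec:seifert-surfaces} that $X$ is a disjoint union of $m(R)/A_R$ horizontal embedded discs; because the multisection over $R$ is permuted cyclically around each exceptional fiber, each such disc is the $A_R$-fold branched cover $\widetilde{R}$ of $R$, whose boundary covers $\partial R$ with degree $A_R$.

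The first main ingredient is the Euler-characteristic bookkeeping. For a nonvanishing framing $v$ of $TX_0$ on one of these discs $X_0\cong\widetilde{R}$, the total turning of the full oriented boundary $\partial X_0$ relative to $v$, including the exterior angles at corners, equals $\chi(X_0)$. By Riemann--Hurwitz, $\chi(\widetilde{R}) = A_R\,\chi_{orb}(R)$, since a cone point of order $\alpha$ has $A_R/\alpha$ preimages each ramified to order $\alpha$; summing over the $m(R)/A_R$ discs gives a total of $m(R)\,\chi_{orb}(R)$. Crucially, $v$ is globally nonvanishing on $S$, so no singularity is introduced over the branch points, and the orbifold correction $\sum_j(1/\alpha_j - 1)$ arises purely from the covering rather than from any monodromy of $v$.

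The second ingredient is the localized correction at each double point of $\partial R$. The full boundary $\partial X_0$ consists of $K$-arcs over the edges of $\Gamma$ joined by short arcs along $C_1 = \partial U_i$ at the crossings; under the orthogonality convention these meet at right angles. Replacing a sharp corner of $\partial R$ by its detour along $C_1$ does not change the total turning near the crossing, so the combined turning of the $C_1$-arc and its two right-angle corners equals the exterior-angle winding of the uncut corner: $+\tfrac14$ where $R$ fills one quadrant (a convex corner) and $-\tfrac14$ where $R$ fills three quadrants (a concave corner), with the two-quadrant (straight) case contributing nothing. Each such crossing is traversed $A_R$ times per disc, hence $m(R)$ times in total, so the $K$-arc turning is the Euler-characteristic total minus these corrections, namely
\[ m(R)\,\chi_{orb}(R) - \frac{m(R)}{4}(k-l) = m(R)\Big[\chi_{orb}(R)+\tfrac14(l-k)\Big]. \]
Dividing by $r$ yields the claimed contribution. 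The local corner computations should be carried out against a constant frame pulled back from $T\Sigma$ over the small disc $D_i$, exactly as in the preceding computation that $K\cap U_i$ contributes zero; since $v$ projects to a framing of $T\Sigma$ there, this agrees with the turning against $v$ up to terms vanishing in the limit.

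The step I expect to be the main obstacle is making the separation of the boundary turning into the global $\chi$ term and the localized corner terms fully rigorous while keeping everything measured against the single global framing $v$: one must verify that comparing $v$ to the local $T\Sigma$-frame at the crossings contributes nothing beyond the exterior angles already counted, and must pin down the signs of the quarter-turns (which distinguish $k$ from $l$) together with the orientation conventions for $\partial R$. By contrast, the Riemann--Hurwitz identity $\chi(\widetilde{R})=A_R\,\chi_{orb}(R)$ and the identification $TX\cong j^*\xi$ are comparatively routine once the framing is understood.
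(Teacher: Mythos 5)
Your proposal is correct and follows essentially the same route as the paper: identify $TS$ with $j^*\xi$ on the horizontal pieces, use Riemann--Hurwitz to get $\chi(S_R)=A_R\chi_{orb}(R)$, apply Poincar\'e--Hopf to relate the boundary winding to the Euler characteristic, and account for the crossings by $\pm\tfrac14$ corner corrections distinguishing one-quadrant from three-quadrant corners. The paper packages the Poincar\'e--Hopf step by embedding $S_R$ in a closed surface with a single critical point outside $S_R$ and subtracting its index from a page-framing winding, but this is just an explicit implementation of your ``relative winding equals $\chi$'' identity, and both computations yield $\frac{1}{r}m(R)\bigl[\chi_{orb}(R)+\tfrac14(l-k)\bigr]$.
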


Note that, together with the discussion above, this lemma finishes the proof of the first part of Proposition~\ref{prop:legendrian}.

\begin{proof}As a consequence of trivializing $\xi$ over the solid tori $U_i$, each truncated region may be replaced by the original region without affecting its contribution to the rotation. 

% See Figure~\ref{fig:truncate}.\footnote{\jsc{Are we sufficiently convinced by this to shorten the discussion and/or remove the figure?}}

%  \begin{figure}[ht]
%  \begin{center}
%  \scalebox{.5}{\includegraphics{triv2.eps}}
%  \caption{ Truncating a region near a double point has no effect on its contribution to the rational rotation number of $K$. }\label{fig:truncate}
%  \end{center}
%  \end{figure}
 
Let $S_R$ be a component of $j(S) \cap M_R$.  Note that $S_R$ an $A_R$-fold branched cover of $R$, branched over the orbifold points of $R$. We represent a trivialization of $\xi|_{S_R}$ by a non-vanishing vector field in $T S_R$, and we use the Poincar\'e-Hopf Theorem to compute the winding number of $T \partial S_R$ with respect to this framing on the boundary. 
Embed $S_R$ as a subsurface of a closed surface $\bar{S}_R$ satisfying $\chi(\bar{S}_R)=\chi(S_R)+1$.  Choose a vector field $v$ on $\bar{S}_R$ that extends the trivialization of $\xi$ in the tori $U_i$ and which has the property that its unique critical point $c$ lies in $\bar{S}_R \setminus S_R$.  Because $S_R$ is a branched cover of $R$, we may use the Riemann-Hurwitz Theorem to compute the Euler characteristic of $S_R$:
 \[ \chi(S_R)= A_R\big[\chi(R)+\sum_{i=1}^r(\frac{1}{\alpha_i}-1)\big].\] 
The Poincar\'e-Hopf Theorem implies that the index of $v$ at the unique critical point $c$  is
 \begin{equation}\label{eq:gr1}
 \text{ind}_c v= 1+ A_R\big[ \chi(R)+\sum_{i=1}^r(\frac{1}{\alpha_i}-1)\big].\end{equation}
 
%We need a trivialization of $\xi$ over all of $S_R$.  We have a trivialization which is fixed near the solid tori, but this is inconsequential, since it's local.  So, let's just find some trivialization over the entire surface.  Where we can identify $\xi$ with $T\Sigma$, we can use the page framing on $\Sigma$, but this fails at orbifold points.  Instead, we'd like to use the fact that $S_R$ is horizontal to identify $\xi$ with $TS_R$.  Then, it suffices to find any non-vanishing vector field in $TS_R$.  So, previously, we were focusing on the orbifold points as the points where the identification between $TS$ and $T\Sigma$ failed. 

We now compute the winding number of $\partial S_R$ as an embedded curve with corners which encircles the singular point of the vector field.  For simplicity, consider the curve $-\partial S_R$ (which bounds  a neighborhood of the critical point positively). Identifying this neighborhood with a neighborhood of the origin in $\mathbb{C}$, and compute the winding number of the tangent to $-\partial S_R$ with respect to the translation-invariant page framing:
 
  \begin{equation}\label{eq:gr2} \text{wind}_{page}(-\partial S_R)-A_R(\frac{k}{4}) +A_R(\frac{l}{4})=1.\end{equation}

\begin{figure}[h!]
\begin{center}
    \relabelbox \footnotesize{
      \centerline{\scalebox{.7}{\epsfbox{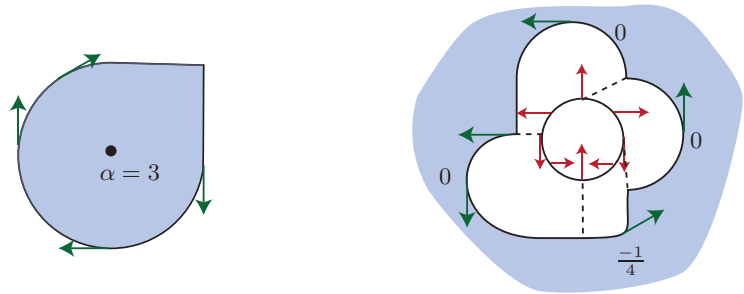}}}}
    \relabel{1}{$\alpha=3$}
    \relabel{2}{$0$}
    \relabel{3}{$0$}
    \relabel{4}{$0$}
        \relabel{5}{$\frac{-1}{4}$}
        \endrelabelbox
   \caption{Left: The boundary of the shaded region is oriented as $-\partial R$, with $k=1$ and $\alpha=3$.  Right: The boundary of a $3$-fold branched cover of $R$ embedded on a sphere.  In a neighborhood of the index two critical point,  $\text{wind}_{page}(-\partial S_R)=\frac{7}{4}$ and $\text{wind}_v(-\partial S_R)=\frac{-1}{4}$.}\label{fig:rotation}
\end{center}
\end{figure}

To convert the winding number with respect to the page framing to the winding number with respect to $v$, subtract the index of $c$:
 \[ \text{wind}_v(-\partial S_R)=\text{wind}_{page}(-\partial S_R)-\text{ind}_c v.\]
 
 The Seifert surface is constructed locally using $\frac{m(R)}{A_R}$ copies of $S_R$, so  the result follows from Equations~(\ref{eq:orb-euler-char}), (\ref{eq:gr1}), and (\ref{eq:gr2}).
 \end{proof}

% ***** 
\subsection{The rational Thurston-Bennequin number}
\label{sect:tb}

In this final section, we use a rational formal Seifert surface and a fiber distribution to compute the rational Thurston-Bennequin invariant.  Recall from \cite{be:rational-tb} that 
the rational Thurston-Bennequin number of a Legendrian knot $K$ is defined to be the rational linking number of $K$ with a transverse push-off $K'$ with respect to some rational Seifert surface for $K$.

Since the fibers are transverse to the contact planes, we may take $K'$  to be the Legendrian push-off  along the Reeb direction; we may think of $K'$ as lying at the bottom of $\partial N$.  Away from the double points of $\Gamma$, the conventions for how a rational Seifert surface $S$ interacts with $N$ in Figure~\ref{fig:notation} imply that there will be no intersection points.  Thus, computing $\text{tb}_{\mathbb{Q}}(K)$ reduces to counting intersections between $S$ and $K'$ in the solid tori over the double points of $\Gamma$.

% The boundary of the rational Seifert surface $S$ lies on $K$, and the intersection of $S$ with a tubular neighborhood $N$  is an embedded curve on $\partial N$. The Legendrian push-off $K'$ also lies on $\partial \nu(S)$, and passing $\partial S$ across the point representing $K'$ in a cylindrical Morse slice corresponds to an intersection between $K'$ and $S$.  The sign of this intersection depends on the transverse orientation of $S$ and the direction of motion, as indicated in Figure~\ref{fig:intersect}. 

\begin{figure}[ht]
 \begin{center}
   \relabelbox \footnotesize{
     \centerline{\scalebox{.7}{\epsfbox{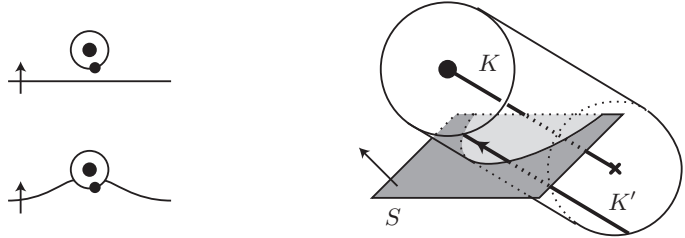}}}}
   \relabel{0}{$S$}
   \relabel{1}{$K'$}
   \relabel{2}{$K$}
  \endrelabelbox
 \caption{The intersection of $K'$ with $S$ for the finger move depicted here is positive.  The sign of the intersection switches if the central ``$\bullet$'' is replaced by ``$\times$'' or if the transverse orientation of $S$ is reversed. }\label{fig:intersect}
 \end{center}
 \end{figure}

% In order to compute the rational Thurston Bennequin number, we count the number of times an endpoint of a curve passes across the dot indicating $K'$ in the sequence of $C_t$ which extends $S$ across the solid torus.

\begin{proof}[Proof of Equation~(\ref{eq:tb})]
  As discussed above, it suffices to examine how the generalized Seifert algorithm extends the Seifert surface $S$ across a fibered neighborhood of a double point of $\Gamma$. The only interactions of $S$ and $K'$ will be when the generalized Seifert algorithm uses  finger moves to push $S$ across the bottom of $N$.  The sign of these intersections may be computed combinatorially as in Figure~\ref{fig:intersect}.  We need to count (with sign) finger moves of $S$ across the bottom of $\partial N$.

The first step in extending $S$ requires sliding each intersection between the fiber and the top edge of $C_t$ into $A_{II}$ and then standardizing the resulting pattern.  Isotope the intersections from $A_{III}$ and $A_{IV}$ to the left across discs where $K$ is oriented to point into the page, and isotope the intersections from $A_I$ to the right across a disc where $K$ is oriented to point out of the page.  Figure~\ref{fig:standardize} shows that moving all the intersections and standardizing the resulting pattern contributes
\[ 2f_{IV}+f_{III}+f_I \]
to the signed intersection number.

\begin{figure}[ht]
  \relabelbox \footnotesize{
     \centerline{\scalebox{.6}{\includegraphics{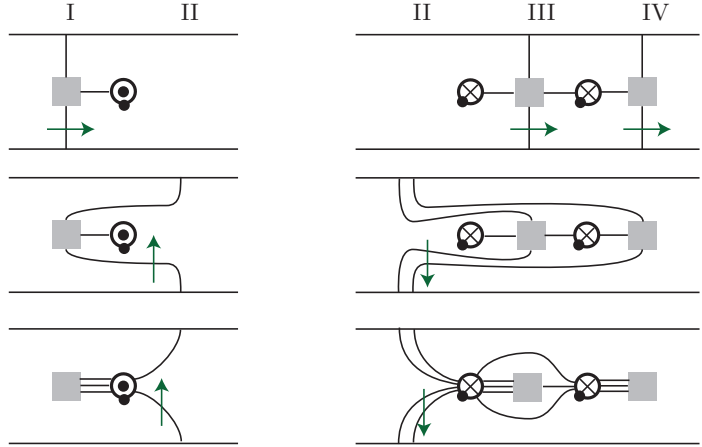}}}}
   \relabel{0}{I}
   \relabel{1}{II}
   \relabel{4}{II}
   \relabel{5}{III}
   \relabel{6}{IV}
   \endrelabelbox
   \caption{Isotopy and standardization: The first step slides the intersections of $A_*$ into $A_{II}$, while the second standardizes the diagram via finger moves.  In this case, $f_*>0$, so the contribution from regions I and III is $+1$, while the contribution from region IV is $+2$.  If the sign of $f_*$ changes, so does the sign of the contribution.}
   \label{fig:standardize}
\end{figure}

Performing saddle moves to eliminate all the longitudinal curves in the pattern does not change the intersection number.  Furthermore, observe that the weight of the curves intersecting each side of the $N$ disc is preserved by the standardization process.  

\begin{figure}[ht]
 \begin{center}
 \scalebox{.6}{\includegraphics{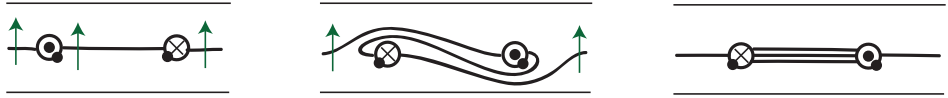}}
 \caption{ Standardizing after the strands of $K$ cross introduces $-m_{II}+m_{III}$ intersections between $K'$ and $S$.
  }\label{fig:steps}
 \end{center}
 \end{figure}

When the strands of $K$ cross, the two $N$ discs on the edges of $A_{II}$ exchange places.  Standardizing the resulting pattern introduces an additional $-m_{II}+m_{III}=-r$ intersections between $S$ and $K'$.  Summing these with the previous intersections and repeating the process at every solid torus yields the following formula for the rational Thurston Bennequin number:
\[ tb_{\mathbb{Q}}(K)=\frac{1}{r}\sum_i (-r+f^i_{I}+f^i_{III}+2f^i_{IV}).\]

To make the formula more elegant, we repeat the same computation, but this time isotope all the intersections to $A_{IV}$ instead.  Counting intersections yields:
\[ tb_{\mathbb{Q}}(K)=\frac{1}{r}\sum_i (-r-f^i_{III}-f^i_{I}-2f^i_{II}).\]

We sum the two formulae for $tb_{\mathbb{Q}}(K)$ and divide by $2$, which yields the desired formula:

\[ \text{tb}_{\mathbb{Q}}(K)=\frac{1}{r}\sum_i (-r-f^i_{II}+f^i_{IV}).\]
\end{proof}

\bibliographystyle{amsplain} 
\bibliography{main}

\end{document}